\newtheorem{theorem}{Theorem}[section]
\newtheorem{corollary}[theorem]{Corollary}
\newtheorem{lemma}[theorem]{Lemma}
\newtheorem{proposition}[theorem]{Proposition}
\theoremstyle{definition}
\newtheorem{definition}[theorem]{Definition}
\newtheorem{remark}[theorem]{Remark}
\newcommand{\ben}{\begin{enumerate}}
\newcommand{\een}{\end{enumerate}}
\newcommand{\bit}{\begin{itemize}}
\newcommand{\eit}{\end{itemize}}
\newcommand{\be}{\begin{equation}}
\newcommand{\ee}{\end{equation}}
\newcommand{\bdm}{\begin{displaymath}}
\newcommand{\edm}{\end{displaymath}}
\newcommand{\bea}{\begin{eqnarray}}
\newcommand{\eea}{\end{eqnarray}}
\begin{document}
  \title{Asymptotic (statistical) periodicity in two-dimensional maps}


    \date{}

  \maketitle
  
  \vspace{-1cm}
  
\centerline{ Fumihiko Nakamura\footnote{nfumihiko@mail.kitami-it.ac.jp}}
\medskip
{\footnotesize
 \centerline{Kitami Institute of Technology,}
   \centerline{165 Koen-cho, Kitami city, Hokkaido, 090-8507, Japan}
} 

\medskip

\centerline{ Michael C. Mackey\footnote{michael.mackey@mcgill.ca}}
\medskip
{\footnotesize
 \centerline{McGill University,}
   \centerline{3655 Promenade Sir William Osler, Montreal, Quebec H3G 1Y6, Canada}
}

\abstract{
In this paper we give a new sufficient condition for asymptotic periodicity of Frobenius--Perron operator corresponding to two--dimensional maps. The result of the asymptotic periodicity for strictly expanding systems, that is, all eigenvalues of the system are greater than one, in a high-dimensional dynamical systems was already known. Our new theorem enables to apply for the system having an eigenvalue smaller than one. The key idea for the proof is a function of bounded variation defined by line integration. Finally, we introduce a new two-dimensional dynamical system exhibiting the asymptotic periodicity with different periods depending on parameter values, and discuss to apply our theorem to the model.
}

\vspace{\baselineskip}

2020 Mathematics Subject Classification. Primary: 37A30, 26A45; Secondary: 37E30.

Key words and phrases. Asymptotic periodicity,  bounded variation, Frobenius--Perron operator, two-dimensional map, Farey series.

\section{Introduction}In examining the behaviour of dynamical systems, two main complementary threads have emerged.   In one, the evolution of trajectories is the main focus, while in the other the evolution of densities is considered.  In the latter case, one can think of the evolution of densities representing the overall statistical behaviour when a large (`infinite') number of trajectories are examined.  In this paper we focus on the second point of view, which is closely related to early work in statistical physics initiated by both Boltzmann \cite{boltzmann96} and Gibbs \cite{gibbs02} over a century ago and which forms the basis of the field of ergodic theory.

In examining the evolution of densities, there are three major types of behaviour that may occur and they are ergodicity, mixing, and exactness \cite{almcmbk94}.  In addition there is a less well known  fourth type of behaviour,  called asymptotic periodicity (or statistical periodicity), which was first introduced and studied
by Keller \cite{keller1982ergodic}.  We will say more about these four types of behaviour in Section \ref{sec:back}.

Asymptotic periodicity 
is known to occur in  deterministic discrete time dynamical systems \cite{lasota-li-yorke,komornik1986asymptotic,lasota1986statistical,provatas91a} as well as being  induced by noise \cite{almcm87,provatas1991noise,nakamura2017}.
One example of asymptotic periodicity in a deterministic setting is that of the hat (or tent) map
\begin{equation}
    x_{n+1}=\left\{
    \begin{array}{ll}
    a x_n   & x_n \in [0,\frac 12]\\
    a(1-x_n) & x_n \in (\frac 12, 1],
    \end{array}
    \right.
    \label{eq:hat map}
    \end{equation}
which was considered by Ito \cite{ito79a,ito79b}, Shigematsu \cite{shig83}, and Yoshida \cite{yoshida83} initially and then by Provatas \cite{provatas91a} within the framework of asymptotic periodicity. To our knowledge the only studies of noise induced asymptotic periodicity are in the noise perturbed Nagumo-Sato \cite{nagumo1972} map (also known as the  Keener \cite{keener1980chaotic} map)  and given by
\begin{equation}
x_{n+1} = \alpha x_n + \beta + \xi_n \qquad \mbox{mod}\ 1 \label{eq:NS}
\end{equation}
where $0 < \alpha, \beta < 1 $ and the $\{\xi_n\}$ are independent random variables distributed with a density $g$, and studied by \cite{almcm87,provatas1991noise,nakamura2017}.

 In this paper we present a new theorem on asymptotic periodicity in maps of dimension greater than one, extending the result of \cite{gora1989absolutely} for asymptotic periodicity in a high-dimensional dynamical systems which was stated for strictly expanding systems, that is, for systems in which all eigenvalues are greater than one.

In Section \ref{ssec:tools}  we summarize some elementary concepts and tools from ergodic theory, and then in Section \ref{ssec:BV} give some background and simple results on bounded variation for functions of two variables that will be essential in the proof of our main Theorem \ref{maintheorem} in Section \ref{sec:main}.  In Section \ref{sec:example} we consider an example of our main theorem  and illustrate how the period changes as parameters are changed.

\section{Background}\label{sec:back}

\subsection{Tools and definitions from ergodic theory}\label{ssec:tools}  This section collects together some basic concepts needed later.  Consult \cite{almcmbk94} for more details.

Let $(X,\mathcal A,\mu)$ be a measure space and assume that a system  has states
distributed in a phase  space $X$, and that the distribution of
these states is characterized by a time dependent density
$f_n(x)$, $n\in\mathbb{N}$.
Remember that $f$ is a {\bf density} if
$f(x) \geq 0,\ \int_{X} f(x)\,d\mu(x) = 1$.
Equilibrium is characterized
by a  time independent density $ f_*(x)$.
Given a phase space $X$ we will  denote the space of all
densities  on $X$ by $D(X)$ or by $D$
if $X$ is understood.

Also think of a dynamics $S$ operating on the same phase space $X$,
$S:X\rightarrow X$.
One way to think about  a dynamics
is through the evolution of a trajectory emanating from a single initial
condition in the
phase space $X$, and a complementary approach is to  study
how a density of initial conditions evolves under the action of the dynamics.  With a dynamics $S$ and initial density $f_0(x)$ of
states, the evolution of the density $f_n(x)$ is given by $f_n(x) = P_S^nf_0(x)$,
wherein $P_S$ is the Markov (or evolution transfer) operator corresponding to $S$.

\begin{definition}
Any operator $P:L^1(X) \rightarrow L^1(X)$ that satisfies
$$
Pf \geq 0\quad \text{and} \quad \lVert Pf \rVert_{L^1} = \lVert f \rVert_{L^1}
$$
for any $f \geq 0$, $f \in  L^1(X)$ is called a {\bf Markov (or evolution)
operator}.  {\it If we restrict ourselves to only considering densities
$f$, then any operator $P$ which when acting on a density again yields a
density is a density evolution operator.}
\end{definition}
\noindent Given an evolution operator $P$ operating on densities alone, so $P:D \to D$, if there is a density $f_*$ such that $Pf_* = f_*$ then  $f_*$ is called a {\bf stationary density}.

\begin{definition}Let $(X, {\mathcal A}, \mu)$ be a measure space .
If $S $ is a nonsingular transformation, then the unique Markov operator
$P:L^1(X) \rightarrow L^1(X) $ defined by
\be
\int_A P f(x)\, d\mu(x) =  \int_{S^{-1}(A)}
f(x) \,d\mu(x)
\label{4.1}
\ee
is called the {\bf Frobenius-Perron operator} corresponding to $S$.
\end{definition}

\begin{definition}Let $(X, {\mathcal A}, \mu)$ be a measure space and let a nonsingular transformation $S\colon X \rightarrow X$ be given. Then $S$ is called {\bf ergodic} if every invariant set $A \in {\mathcal A}$ (i.e. $S^{-1}(A)=A$) is such that either $\mu(A) = 0$ or $\mu (X\setminus A) = 0$; that is, $S$ is ergodic if all invariant sets are {\bf trivial} subsets of $X$.
\end{definition}
\noindent Ergodicity is equivalent to:

 \begin{theorem}\cite[Theorem 4.4.1a]{almcmbk94}
 Let $(X, {\mathcal A}, \mu)$ be a normalized measure space, $\mu(X)=1$.
  A dynamics $S$ on a phase space $X$ with
Frobenius-Perron operator $P_S$ and unique stationary density $f_*$ is {\it ergodic} if and only if $\lbrace P^nf_0 \rbrace $ is Ces\`aro convergent to $f_*$ for all initial densities $f_0$, {\it i.e.}, if
\begin{equation}
\lim_{n\to \infty } \frac{1}{n} \sum_{k=0}^{n-1} <P^kf_0,g>  = <f_*,g>
\label{e-ergod}
\end{equation}
where $g$ is any bounded measurable function and
\begin{equation}
<f,g> = \int_X f(x)g(x)d\mu(x)
\end{equation}
denotes the $\mathbb{R}$-valued inner product.
\end{theorem}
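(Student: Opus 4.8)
The plan is to prove both implications through the adjoint (duality) relationship between the Frobenius--Perron operator $P=P_S$ and the Koopman operator $Ug = g\circ S$, combined with Birkhoff's individual ergodic theorem applied to the invariant probability measure $d\mu_* = f_*\,d\mu$. First I would iterate the defining relation \eqref{4.1} to obtain, for every $f_0\in D$ and every bounded measurable $g$, the identity $\langle P^k f_0, g\rangle = \langle f_0, g\circ S^k\rangle = \int_X f_0(x)\,g(S^k(x))\,d\mu(x)$. Averaging this over $k$ turns the Ces\`aro average of the operator iterates into the integral of a Birkhoff time average,
\begin{equation}
\frac1n\sum_{k=0}^{n-1}\langle P^k f_0, g\rangle = \int_X f_0(x)\,\Big(\frac1n\sum_{k=0}^{n-1} g(S^k(x))\Big)\,d\mu(x),
\end{equation}
which is the bridge between the statistical formulation and the pointwise ergodic theory.

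For the forward implication I would note that $Pf_*=f_*$ makes $\mu_*$ an $S$-invariant probability measure, so Birkhoff's theorem gives $\frac1n\sum_{k=0}^{n-1} g\circ S^k \to g^*$ pointwise $\mu_*$-almost everywhere, where $g^*$ is $S$-invariant and $\int_X g^*\,d\mu_* = \int_X g\,d\mu_* = \langle f_*, g\rangle$. Ergodicity forces every invariant function to be constant almost everywhere, hence $g^* \equiv \langle f_*, g\rangle$. Since $g$ is bounded the time averages are uniformly bounded by $\lVert g\rVert_{L^\infty}$, so the dominated convergence theorem applied to the right-hand side of the displayed identity (integrating against the finite measure $f_0\,d\mu$) yields the limit $\langle f_*, g\rangle$, as required.

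For the converse I would argue by contraposition. If $S$ is not ergodic there is an invariant set $A$ with $S^{-1}(A)=A$ and, using uniqueness of the stationary density to exclude degenerate cases, with $0<\mu_*(A)<1$. Then $f_0 = f_*\mathbf{1}_A/\mu_*(A)$ is a density, and the Frobenius--Perron identity \eqref{4.1} with $B = X\setminus A$ together with $S^{-1}(X\setminus A) = X\setminus A$ shows inductively that each $P^k f_0$ remains supported in $A$. Consequently $\langle P^k f_0, \mathbf{1}_{X\setminus A}\rangle = 0$ for all $k$, so the Ces\`aro average for the pair $(f_0, \mathbf{1}_{X\setminus A})$ equals $0$, whereas $\langle f_*, \mathbf{1}_{X\setminus A}\rangle = \mu_*(X\setminus A) > 0$; this contradiction establishes ergodicity.

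I expect the main obstacle to be the interplay between the reference measure $\mu$ and the invariant measure $\mu_*$. Birkhoff's theorem only controls the time averages on the support of $f_*$ and says nothing on $\{f_* = 0\}$, yet the definition of ergodicity used here is formulated with respect to $\mu$. The hypothesis that $f_*$ is the \emph{unique} stationary density is precisely what must be exploited to rule out nontrivial invariant structure carried by $\{f_* = 0\}$: were such structure present, one could construct a second stationary density distinct from $f_*$, violating uniqueness, and this is what lets me identify $\mu$-ergodicity with $\mu_*$-ergodicity. Pinning down this equivalence carefully, and thereby justifying the passage between $\mu$- and $\mu_*$-almost-everywhere statements in both directions, is the delicate part of the argument.
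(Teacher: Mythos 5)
The paper itself offers no proof of this statement: it is quoted (with light rewording) from Lasota and Mackey, Theorem 4.4.1a, so there is nothing in the paper to compare your argument against. What you propose is the standard textbook proof of that theorem: the duality $\langle P^k f_0,g\rangle=\langle f_0,g\circ S^k\rangle$, Birkhoff's individual ergodic theorem for the invariant measure, and dominated convergence for the forward direction; an invariant set of intermediate measure for the converse. In the setting of the cited source $S$ is assumed measure-preserving, so $f_*\equiv 1$, $\mu_*=\mu$, and your forward argument closes cleanly; in that reading your proof is correct and is essentially the canonical one.

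As the theorem is restated in this paper, however --- $S$ merely nonsingular with a unique stationary density $f_*$ --- the difficulty you flag at the end is a genuine gap, not just a delicate point. Birkhoff's theorem applied to $d\mu_*=f_*\,d\mu$ controls the time averages $\frac1n\sum_{k=0}^{n-1}g\circ S^k$ only $\mu_*$-almost everywhere, i.e.\ on $\{f_*>0\}$, whereas the identity you integrate is against $f_0\,d\mu$ and $f_0$ may carry all of its mass on $\{f_*=0\}$. Your proposed repair --- that uniqueness of $f_*$ excludes invariant structure on $\{f_*=0\}$ --- is asserted rather than executed, and it is not obviously sufficient: the absence of a second stationary density does not by itself force the time averages to converge, let alone to converge to $\langle f_*,g\rangle$, off the support of $f_*$. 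Closing this would require an extra ingredient (conservativity or sweeping of $\{f_*=0\}$, or simply the measure-preserving hypothesis of the original theorem). A smaller point: in the converse, your witness $f_*\mathbf{1}_A/\mu_*(A)$ degenerates when the nontrivial invariant set $A$ lies inside $\{f_*=0\}$; the fix is to take $\mathbf{1}_A/\mu(A)$ (or $\mathbf{1}_{X\setminus A}/\mu(X\setminus A)$), whose iterates stay supported in $A$ by the same computation with \eqref{4.1}, and to test against the indicator of whichever of $A$, $X\setminus A$ has positive $\mu_*$-measure.
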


\begin{definition} Let $(X, {\mathcal A}, \mu)$ be a normalized measure space,
and  $S\colon X \rightarrow X$ a measure-preserving transformation. $S$ is called {\bf mixing} if
\begin{equation}
\lim_{n\rightarrow \infty} \mu (A \cap S^{-n}(B)) = \mu (A) \mu (B) \qquad \mbox{for all } A,B \in {\mathcal A}.
\end{equation}
\end{definition}
\noindent Mixing implies ergodicity and is equivalent to:


 \begin{theorem}\cite[Theorem 4.4.1b]{almcmbk94}
  A dynamics $S$ on a phase space $X$ with
Frobenius-Perron operator $P_S$ and unique stationary density $f_*$ is {\it mixing} if and only if
\begin{equation}
\lim_{n \to \infty} < P_S^nf_0,g> = <f_*,g>,
\label{e-mix}
\end{equation}
for every initial density $f_0\in\mathcal{D}$ and bounded measurable function $g$.
\end{theorem}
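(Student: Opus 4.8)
The plan rests on the duality between the Frobenius--Perron operator $P_S$ and composition by $S$. Starting from the defining relation \eqref{4.1}, I would take $g=\mathbf 1_A$ and use $\mathbf 1_{S^{-1}(A)}=\mathbf 1_A\circ S$ to obtain $\langle P_S f,\mathbf 1_A\rangle=\langle f,\mathbf 1_A\circ S\rangle$; extending by linearity and a standard approximation to all bounded measurable $g$, and then iterating, gives the key identity $\langle P_S^n f,g\rangle=\langle f,g\circ S^n\rangle$ for every $f\in L^1(X)$ and every bounded measurable $g$. Since $S$ is measure preserving (as required by the definition of mixing) and $\mu(X)=1$, the constant function $\mathbf 1$ is the stationary density, so $f_*\equiv\mathbf 1$ and $\langle f_*,g\rangle=\int_X g\,d\mu$. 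These two observations are the engine of both implications.

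For the direction in which the stated convergence forces mixing, the argument should be immediate: given $A,B\in\mathcal A$ with $\mu(A)>0$, I would apply the hypothesis to the density $f_0=\mathbf 1_A/\mu(A)$ and to $g=\mathbf 1_B$. The identity above turns the left-hand side into $\mu(A\cap S^{-n}(B))/\mu(A)$ and the right-hand side into $\mu(B)$, so $\mu(A\cap S^{-n}(B))\to\mu(A)\mu(B)$; the case $\mu(A)=0$ is trivial. This is exactly mixing.

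For the converse, assuming $S$ mixing, the same computation already yields the target limit when $f_0$ and $g$ are indicators, hence by linearity when $f_0$ is a simple density and $g$ a simple function. To pass to an arbitrary density $f_0$ and arbitrary bounded measurable $g$, I would run a double approximation: given $\epsilon>0$ choose a simple $g'$ with $\|g-g'\|_\infty<\epsilon$ and a simple density $f_0'$ with $\|f_0-f_0'\|_{L^1}<\epsilon$, then telescope
\[
\langle P_S^n f_0,g\rangle-\langle f_*,g\rangle=\langle P_S^n f_0,g-g'\rangle+\langle P_S^n(f_0-f_0'),g'\rangle+\big(\langle P_S^n f_0',g'\rangle-\langle f_*,g'\rangle\big)+\langle f_*,g'-g\rangle .
\]
I expect to bound the first term by $\|P_S^n f_0\|_{L^1}\|g-g'\|_\infty<\epsilon$, the second by $\|f_0-f_0'\|_{L^1}\|g'\|_\infty$ using that a Markov operator is an $L^1$ contraction, and the fourth by $\|f_*\|_{L^1}\|g-g'\|_\infty=\epsilon$, while the third tends to $0$ by the simple-function case. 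Taking $\limsup_{n\to\infty}$ and then letting $\epsilon\downarrow 0$ should close the argument.

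The main obstacle is precisely this uniform-in-$n$ approximation. It works because $g$ is bounded and because $P_S^n$ contracts the $L^1$ norm even on the sign-changing difference $f_0-f_0'$, so both error terms remain controlled independently of $n$; the one point demanding care is invoking the contraction for a general (not necessarily nonnegative) $L^1$ function rather than for a density. Everything else collapses to the single indicator computation together with the composition identity, so beyond this bookkeeping I would not expect surprises.
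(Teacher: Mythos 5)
This theorem is quoted from \cite{almcmbk94} and the paper supplies no proof of its own, so there is nothing internal to compare against; your argument is correct and is essentially the standard proof given in that reference. The duality $\langle P_S^n f,g\rangle=\langle f,g\circ S^n\rangle$, the indicator computation with $f_*\equiv 1$, and the uniform-in-$n$ double approximation (which is legitimate because a Markov operator is an $L^1$ contraction on all of $L^1$, not merely on densities) are all sound.
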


\begin{definition}  Let $(X, {\mathcal A}, \mu)$ be a normalized measure space and $S\colon X \rightarrow X$ a measure-preserving transformation such that $S(A) \in {\mathcal A}$ for each $A \in {\mathcal A}$. If
\begin{equation}
\lim_{n\rightarrow \infty} \mu(S^{n}(A)) = 1\qquad \mbox{for every } A \in {\mathcal A}, \mu(A) > 0,
\end{equation}
then $S$ is called {\bf exact} or {\bf asymptotically stable}.
\end{definition}

\noindent Exactness implies mixing and is equivalent to:

 \begin{theorem}\cite[Theorem 4.4.1c]{almcmbk94}
A dynamics $S$ on a phase space $X$ with
Frobenius-Perron operator $P_S$ and unique stationary density $f_*$ is {\it asymptotically stable } if and only if
\begin{equation}
\lim_{n \to \infty} || P_S^nf - f_*||_{L^1} = 0
\label{e-exact}
\end{equation}
for {\it every} initial density $f \in D$.
\end{theorem}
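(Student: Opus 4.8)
The statement to prove is the equivalence, for a measure--preserving $S$ on the normalized space $(X,\mathcal A,\mu)$, between exactness ($\mu(S^n(A))\to 1$ whenever $\mu(A)>0$) and the $L^1$--convergence $\lVert P_S^n f - f_*\rVert_{L^1}\to0$ for every density $f$. The plan is first to record that, because the notion of exactness is defined only for $S$ preserving $\mu$, the constant density satisfies $P_S\mathbf 1=\mathbf 1$, so by uniqueness $f_*\equiv\mathbf 1$; hence it suffices to show $\mu(S^n(A))\to1 \Leftrightarrow \lVert P_S^nf-\mathbf 1\rVert_{L^1}\to0$. (A general stationary $f_*>0$ is reduced to this case by passing to the reference measure $f_*\,d\mu$.) The central device will be the Koopman operator $Uf=f\circ S$, which is dual to $P_S$ in the sense $\langle P_S f,g\rangle=\langle f,Ug\rangle$ (immediate from \eqref{4.1}), and which is an $L^1$--isometry because $\int|g\circ S|\,d\mu=\int|g|\,d\mu$ by measure--preservation.

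For the forward implication (exactness $\Rightarrow$ $L^1$--convergence) the key algebraic fact is that $U^nP_S^n$ coincides with the conditional expectation $E(\,\cdot\mid\mathcal A_n)$ onto the decreasing family $\mathcal A_n:=S^{-n}(\mathcal A)$. I would verify this from the two identities $\langle U^na,U^nb\rangle=\langle a,b\rangle$ and $\langle P_S^nf,g\rangle=\langle f,U^ng\rangle$, which together give $\langle U^nP_S^nf,\,U^ng\rangle=\langle f,U^ng\rangle$ for all bounded $g$, combined with the fact that the $\mathcal A_n$--measurable functions are exactly those of the form $g\circ S^n$. Using the isometry one then has $\lVert P_S^nf-\mathbf 1\rVert_{L^1}=\lVert U^n(P_S^nf-\mathbf 1)\rVert_{L^1}=\lVert E(f\mid\mathcal A_n)-\mathbf 1\rVert_{L^1}$. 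I would then invoke the classical (Rohlin) characterization that exactness is equivalent to triviality mod $\mu$ of the tail $\sigma$--algebra $\mathcal A_\infty=\bigcap_n\mathcal A_n$, so that the reverse (decreasing) martingale convergence theorem yields $E(f\mid\mathcal A_n)\to E(f\mid\mathcal A_\infty)=\int f\,d\mu=1$ in $L^1$, which is precisely the desired conclusion.

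For the converse I would argue by the support localization of $P_S^nf$. Fix $A$ with $\mu(A)>0$ and set $f_A=\mathbf 1_A/\mu(A)\in D$. Since $A\subseteq S^{-n}(S^n(A))$, the defining relation \eqref{4.1} gives $\int_{S^n(A)}P_S^nf_A\,d\mu=\int_{S^{-n}(S^n(A))}f_A\,d\mu\ge\int_A f_A\,d\mu=1$; as $P_S^nf_A$ is a density this forces $\int_{S^n(A)}P_S^nf_A\,d\mu=1$, i.e. $P_S^nf_A=0$ a.e. off $S^n(A)$. Consequently
\[
\mu(S^n(A))=\int_{S^n(A)}\mathbf 1\,d\mu=1+\int_{S^n(A)}\bigl(\mathbf 1-P_S^nf_A\bigr)\,d\mu\ \ge\ 1-\lVert P_S^nf_A-\mathbf 1\rVert_{L^1},
\]
and since the right--hand side tends to $1$ while $\mu(S^n(A))\le1$, we conclude $\mu(S^n(A))\to1$, i.e. exactness.

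The main obstacle is the forward direction, and within it the two imported facts around which the argument is organized: that $U^nP_S^n$ is the conditional expectation onto $\mathcal A_n$, and that exactness is equivalent to triviality of the tail $\sigma$--algebra. Once these are in place the reverse martingale convergence theorem does the work, and the $L^1$--isometry of $U$ is what converts the $P_S$--dynamics into a clean conditional--expectation limit. By contrast the converse is elementary, resting only on the observation that $P_S^nf_A$ is concentrated on $S^n(A)$.
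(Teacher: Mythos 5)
The paper does not actually prove this statement --- it is quoted verbatim from \cite[Theorem 4.4.1c]{almcmbk94} --- so there is no in-paper argument to compare against; your proposal is essentially the standard proof of that cited textbook result and is correct. Both directions are sound: the converse is elementary and complete as written, and the forward direction correctly reduces to reverse martingale convergence via the identity $U^nP_S^n=E(\,\cdot\mid S^{-n}\mathcal A)$, where the only piece of Rohlin's characterization you actually need is the easy half (exactness in the sense $\mu(S^n(A))\to 1$ implies triviality of $\bigcap_n S^{-n}\mathcal A$, which follows in one line from $S^n(S^{-n}(B_n))\subseteq B_n$ together with measure preservation), so invoking the full equivalence is more than is required.
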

Asymptotically stable systems have a number of interesting properties (cf. \cite{almcmbk94,mcmtdbk} for more complete details).  Asymptotically stable systems are non-invertible and they always have a unique stationary  density $f_*$.

Next, we define a smoothing Markov operator.
\begin{definition} Let $(X,\mathcal A,\mu)$ be a measure space.
A Markov operator $P$ is said to be {\bf smoothing} (or {\bf constrictive})  if there exists a set
$A$ of finite measure, and two positive
constants $k < 1$ and $\delta   > 0$ such
that for every set $E$ with $\mu(E) < \delta$  and every density $f$
there is some integer $n_0(f,E)$ for which
$$
\int_{E \cup (X \setminus A)} P^nf(x)\, d\mu(x) \leq k \qquad \text{for  }n \geq
n_0(f,E).
$$
\end{definition}
This definition of smoothing just means that any initial
density, even if concentrated on a small region of the phase space $X$,
will eventually be 'smoothed' out by $P^n$ and not end up looking like a delta function.  Notice that if
$X$ is a finite phase space we can take $X=A$ so the smoothing condition
looks simpler:
$$
\int_{E} P^nf(x)\, d\mu(x) \leq k \qquad \text{for  }n \geq
n_0(f,E).
$$

Smoothing
operators are important because of a theorem of \cite{komornik87} introduced next,
first proved in a more restricted situation by \cite{lasota-li-yorke}. Although the property called weakly constrictive introduced in \cite{lasota-li-yorke} and \cite{komornik87} seems to be different from  smoothing, it also leads to asymptotic periodicity. Conversely, we can immediately show that an asymptotically periodic Markov operator is smoothing and weakly constrictive in the sense of \cite{lasota-li-yorke}. Thus we conclude smoothing and weakly constrictiveness  are equivalent.

\begin{theorem} [Spectral Decomposition Theorem, \cite{komornik87}\label{thm:spec-decomp}] Let $P$ be a smoothing Markov operator.  Then there is an integer
$r > 0$, a sequence of nonnegative densities $g_{i} $ and a sequence of bounded
linear functionals   $\lambda _i$, $i = 1,\ldots ,r, $ and an operator $Q:L^1(X)\rightarrow L^1(X) $ such that for all densities $f$, $Pf $ has the form
\be
Pf(x) =  \sum_{i=1}^r \lambda _{i}(f)g_{i}(x) + Qf(x).
\label{6.1}
\ee
The densities $g_{i} $ and the transient
operator $Q$ have the following properties:
\ben
\item      The $g_{i}$ have disjoint support (i.e. are mutually orthogonal
and thus form a basis set),
so $g_{i}(x)g_j(x) = 0 $ for all $i \ne j$.
\item      For each integer $i$ there is a unique integer $\alpha   (i) $ such
that $Pg_{i} = g_{\alpha (i)}$.  Furthermore, $\alpha (i) \ne  \alpha (j) $ for $i
\ne j$.  Thus the operator $P$ permutes the densities $g_{i}$.
\item $\parallel P^nQf \parallel \rightarrow 0 $ as $n \rightarrow \infty, \qquad n \in \mathbb{N}$.
\een
\end{theorem}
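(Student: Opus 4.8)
The plan is to derive the whole decomposition from the single hypothesis of constrictiveness, by first promoting it to a compactness statement and then analysing the dynamics of $P$ on the resulting compact attractor. First I would strengthen the smoothing definition itself. As stated it only supplies a fixed constant $k<1$, but the Markov property permits a bootstrap: writing $P^{n}f = P^{m}(P^{n-m}f)$ and applying the smoothing estimate repeatedly to the density $P^{n-m}f$, one upgrades $k<1$ to arbitrarily small tolerances, i.e. for every $\varepsilon>0$ there is a set $A_\varepsilon$ of finite measure and a $\delta_\varepsilon>0$ with $\int_{E\cup(X\setminus A_\varepsilon)}P^{n}f\,d\mu\le\varepsilon$ for all $E$ with $\mu(E)<\delta_\varepsilon$ and all large $n$. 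This says precisely that the tail $\{P^{n}f : n\ge n_0\}$ is uniformly integrable and tight, so by the Dunford--Pettis theorem it is weakly precompact in $L^1(X)$. Consequently there is a weakly compact set $\mathcal F\subset D$ attracting every trajectory, in the sense that $\inf_{h\in\mathcal F}\lVert P^{n}f-h\rVert_{L^1}\to 0$.

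Next I would analyse $P$ on $\mathcal F$. Since $P$ is positive and preserves $\lVert\cdot\rVert_{L^1}$ on densities, and $\mathcal F$ is forward invariant, $P$ restricts to a bijection of the set of weak limit points onto itself. The core of the argument is to show this limit set is generated by finitely many densities with disjoint supports. For this I would examine the supports of limiting densities: if two limit densities had distinct but overlapping supports, then using positivity and norm preservation one could extract a strictly smaller nonzero limit (via a suitably normalised $\min$ of the two), contradicting a minimality property forced by weak compactness. Iterating this cutting procedure produces basic densities $g_1,\dots,g_r$ with $g_i g_j=0$ for $i\ne j$, which is property (1); here $r<\infty$ because infinitely many disjointly supported unit-mass densities would violate the uniform integrability furnished by Dunford--Pettis.

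I would then establish the permutation property (2). Because each $g_i$ lies in $\mathcal F$, so does $Pg_i$; since $P$ preserves $\lVert\cdot\rVert_{L^1}$ and positivity and the $g_i$ exhaust the basic elements of $\mathcal F$, each $Pg_i$ must again be one of the basic densities, say $g_{\alpha(i)}$, and injectivity of $\alpha$ follows because $P$ cannot merge two disjoint unit-mass densities without losing $L^1$ mass. Finally I would define $\lambda_i(f)$ by integrating the limiting density of $\{P^{n}f\}$ over $\mathrm{supp}\,g_i$ — these are nonnegative bounded linear functionals by construction — and set $Qf = Pf-\sum_{i=1}^r\lambda_i(f)g_i$. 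Property (3), $\lVert P^{n}Qf\rVert\to 0$, is then exactly the statement that the trajectory converges to its projection onto the span of the $g_i$, which is what attraction to $\mathcal F$ provides.

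The step I expect to be the main obstacle is the structural extraction in the middle paragraph: proving that the weak limit set is spanned by finitely many disjointly supported densities. The bootstrapping and Dunford--Pettis step is essentially soft functional analysis, but controlling the supports — ensuring the cutting procedure is compatible with the $P$-dynamics and terminates after finitely many steps — requires the delicate minimality argument that carries the real content of the theorem.
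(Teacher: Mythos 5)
First, a point of reference: the paper does not prove this theorem at all --- it is imported verbatim from Komorn\'{\i}k--Lasota \cite{komornik87} (with the original version in \cite{lasota-li-yorke}), so your sketch can only be measured against the literature, not against an in-paper argument.

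Measured that way, there are two genuine gaps. The first is your opening ``bootstrap.'' Writing $P^{n}f=P^{m}(P^{n-m}f)$ and applying the smoothing estimate to the density $P^{n-m}f$ returns the \emph{same} constant $k$, not $k^{2}$: the estimate bounds $\int_{E\cup(X\setminus A)}P^{m}g\,d\mu$ by $k$ for \emph{any} density $g$, and there is no multiplicative gain; moreover $n_{0}(g,E)$ depends on $g$, so as $n$ varies you lose all uniformity. If you instead try to split $P^{n-m}f$ into its parts on and off $E\cup(X\setminus A)$ and renormalize, the ``good'' part still only obeys the bound $k$, and you get $k^{2}+k$, which is worse. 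Upgrading a single $k<1$ to arbitrary $\varepsilon$ (equivalently, passing from the weak smoothing condition to genuine weak precompactness of the tails) is precisely the hard step that Komorn\'{\i}k's work supplies; it cannot be dismissed as ``soft functional analysis.'' The second gap you partly concede yourself: the extraction of finitely many disjointly supported basic densities, the surjectivity of $P$ on the limit set, and the permutation structure constitute essentially the entire content of the Lasota--Li--Yorke argument, and ``a suitably normalised $\min$'' plus an unspecified ``minimality property'' is not a proof. Two further points would fail as written even if the structure were granted: injectivity of $\alpha$ does \emph{not} follow from ``$P$ cannot merge two disjoint unit-mass densities without losing $L^{1}$ mass,'' because a Markov operator is an isometry only on nonnegative functions ($P(g_{1}+g_{2})/2$ keeps norm $1$ even if $Pg_{1}=Pg_{2}$); the correct route is surjectivity of $P$ on a finite set. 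And defining $\lambda_{i}(f)$ by ``integrating the limiting density of $\{P^{n}f\}$'' is circular: the sequence does not converge (it is asymptotically \emph{periodic}), so the functionals must be defined more carefully, e.g.\ as $\lim_{n}\int_{\mathrm{supp}\,g_{\alpha^{n}(i)}}P^{n}f\,d\mu$, and their linearity and boundedness then require proof.
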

Notice from \eqref{6.1} that $P^{n+1}f$ may be immediately written in the
form
\be
P^{n+1}f(x) =  \sum_{i=1}^r \lambda _i (f)g_{\alpha ^t (i)}(x) + Q_{n}f(x), \qquad n \in \mathbb{N}
\label{6.2}
\ee
where $Q_{n} = P^{n}Q$, $\parallel Q_{n}f \parallel \rightarrow 0$ as $n
\rightarrow \infty$, and  $\alpha ^n (i) = \alpha (\alpha ^{n-1}(i)) =
\cdots$. The density terms in the summation of \eqref{6.2} are just permuted by each
application of $P$.  Since $r$ is finite, the series
\be
\sum _{i=1}^r \lambda _i (f) g_{\alpha ^t (i)}(x)
\label{6.3}
\ee
must be periodic with a period $T \leq r!$.  Further, as
$
\lbrace \alpha ^n(1),\ldots, \alpha ^n(r) \rbrace
$
is just a permutation of ${1,\cdots ,r}$ the summation \eqref{6.3} may be
written in the alternative form
$$
\sum _{i=1}^r \lambda _{\alpha^{-t}(i)}(f)g_i(x),
$$
where  $\alpha^{-n}(i)$ is the inverse permutation  of $\alpha ^n(i)$.

     This rewriting of the summation portion of \eqref{6.2} makes the effect of
successive applications of $P$ completely transparent.  Each application of
$P$ simply
permutes the set of scaling coefficients associated with the densities
$g_{i}(x)$ [remember that these densities have disjoint support].

 Since $T$ is finite and the summation \eqref{6.3} is periodic (with a
period bounded above by $r!$), and $\parallel Q_{n}f \parallel \rightarrow
0$ as $n \rightarrow \infty$, we say that for any smoothing Markov operator
the sequence $\lbrace P^nf \rbrace$ is {\bf asymptotically (statistically) periodic} or,
more briefly, that $P$ is {\bf asymptotically periodic}. Komorn\'{i}k \cite{komornik91} has reviewed the subject of asymptotic periodicity.

     Asymptotically periodic Markov operators always have at least one
stationary density given by
\be
f_*(x) = \dfrac{1}{r} \sum _{i=1}^r g_i(x),
\label{ap-sd}
\ee
where $r$ and the $g_{i}(x)$ are defined in Theorem \ref{thm:spec-decomp}.
It is easy
to see that $f_*(x)$ is a stationary density, since by Property 2 of
Theorem \ref{thm:spec-decomp} we also have
$$
Pf_*(x) = \dfrac {1}{r} \sum_{i=1}^r g_{\alpha (i)}(x),
$$
and thus $f_*$ is a stationary density of $P^n$.  Hence, for any smoothing
Markov operator the stationary density \eqref{ap-sd} is just the average of the
densities $g_{i}$.

\begin{remark}
 It is known \cite[Section 5.5]{almcmbk94} that mixing, exactness and asymptotically periodicity with $r=1$ are all equivalent for a smoothing Markov operator. This means that the case $r=1$ has a strictly stronger mixing property than the case $r>1$. In terms of published examples having periodicity with not only $r=1$ but also $r>1$, we only know the hat map \eqref{eq:hat map} and the noise perturbed \cite{nagumo1972}  map \eqref{eq:NS} (see section \ref{sec:example} for a discussion of the parameters of the hat map showing asymptotic periodicity when $r>1$). The model we introduce in Section \ref{sec:example} is a new two-dimensional example having different periods depending on parameter values.
 \end{remark}

\subsection{Functions of bounded variation in two variables}\label{ssec:BV}

There are many definitions of the total variation for functions of two real variables. For example, see \cite{vitali1904sulle} and \cite{hardy1905double} summarized in \cite{adams1934properties,chistyakov2010maps}. In this paper, we refer to the definition in \cite{ashton2005functions} which is defined using line integration.

Consider a compact subset $\sigma\subset\mathbb{R}^2$, a function $f:\sigma\to\mathbb{R}$ and a continuous and piecewise $C^1$ curve $\gamma:[0,1]\to\mathbb{R}^2$. Although Ashton \cite{ashton2005functions} found it sufficient to consider polygonal curves, that is, piecewise linear continuous curves, we need to treat more general continuous curves since we focus on non-linear transformations. We denote the set of all continuous and piecewise $C^1$ curves by $\Gamma$.

\begin{definition}
Let $\gamma\in\Gamma$, then $\{(x_i,y_i)\}_{i=1}^n$ is called a {\bf partition} of $\gamma$ over $\sigma$ if $(x_i,y_i)\in\sigma$ for all $i$ and there exists a partition $\{s_i\}_{i=1}^n \in \Lambda([0,1])$ such that $(x_i,y_i)=\gamma(s_i)$ for all $i$, where  $\Lambda([0,1])$ is the set of all partitions of $[0,1]$. The set of all partitions of $\gamma$ over $\sigma$ is denoted by $\Lambda(\gamma,\sigma)$.
\end{definition}

\begin{definition}
Let $\sigma\subset\mathbb{R}^2$ be  compact, and consider a function $f:\sigma\to\mathbb{R}$ and a curve $\gamma\in\Gamma$. The {\bf variation} of $f$ along the curve $\gamma$ is defined as
\begin{eqnarray}
{\rm cvar}(f,\gamma,\sigma):=\sup_{\{(x_j,y_j)\}_{j=1}^{n} \in\Lambda(\gamma,\sigma)}\sum_{j=1}^{n-1}|f(x_{j+1},y_{j+1})-f(x_j,y_j)|.
\end{eqnarray}
\end{definition}

\begin{remark}
From the definition, one can rewrite ${\rm cvar}(f,\gamma,\sigma)$ as
\begin{eqnarray}
{\rm cvar}(f,\gamma,\sigma)=\sup_{\substack{\{t_j\}_{j=1}^{n} \in\Lambda([0,1]) \\ \gamma(t_j)\in\sigma}}\sum_{j=1}^{n-1}|f\circ\gamma(t_{j+1})-f\circ\gamma(t_j)|.
\end{eqnarray}
Note that we sometimes omit $\gamma(t_j)\in\sigma$ and simply write $\displaystyle\sup_{\{t_j\}_{j=1}^{n} \in\Lambda([0,1])}$ for the above equation.
\end{remark}

The following basic properties for the variation are known.

\begin{proposition}\label{procvar}
(\cite[Proposition 3.2]{ashton2005functions})
Let $\sigma_1\subset\sigma$ be a nonempty compact subset of $\mathbb{R}^2$, $f,g:\sigma\to\mathbb{R}$, $\gamma\in\Gamma$ and $\alpha\in\mathbb{R}$. Suppose $\gamma=\gamma_1\circ\gamma_2\in\Gamma$ with $\gamma_1(1)\in\sigma$. Then,
\begin{itemize}
\item[(i)] ${\rm cvar}(f+g,\gamma,\sigma)\leq {\rm cvar}(f,\gamma,\sigma) + {\rm cvar}(g,\gamma,\sigma)$,
\item[(ii)] ${\rm cvar}(fg,\gamma,\sigma)\leq \|f\|_{\infty}{\rm cvar}(g,\gamma,\sigma)+\|g\|_{\infty}{\rm cvar}(f,\gamma,\sigma)$,
\item[(iii)] ${\rm cvar}(\alpha f,\gamma,\sigma)=|\alpha|{\rm cvar}(f,\gamma,\sigma)$,
\item[(iv)] ${\rm cvar}(g,\gamma,\sigma)={\rm cvar}(g,\gamma_1,\sigma)+{\rm cvar}(g,\gamma_2,\sigma)$,
\item[(v)] ${\rm cvar}(g,\gamma_1,\sigma)\leq{\rm cvar}(g,\gamma,\sigma)$,
\item[(vi)] ${\rm cvar}(g,\gamma,\sigma_1)\leq{\rm cvar}(g,\gamma,\sigma)$.
\end{itemize}
\end{proposition}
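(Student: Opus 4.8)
The plan is to exploit the reformulation in the preceding Remark, which recasts ${\rm cvar}(f,\gamma,\sigma)$ as the one–dimensional total variation of the scalar function $f\circ\gamma$, taken over admissible partitions $\{t_j\}$ of $[0,1]$ subject to the constraint $\gamma(t_j)\in\sigma$. Each of the six properties then reduces to a familiar statement about total variation transported through the parametrization. I would treat (i)--(iii) first as essentially termwise estimates, then (iv) as the substantive claim, and finally derive (v) and (vi) as easy consequences.

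For (i) and (ii) I would fix an arbitrary admissible partition $\{t_j\}$ and bound the summand pointwise. For (i) the triangle inequality gives
\[
|(f+g)\circ\gamma(t_{j+1})-(f+g)\circ\gamma(t_j)|\le|f\circ\gamma(t_{j+1})-f\circ\gamma(t_j)|+|g\circ\gamma(t_{j+1})-g\circ\gamma(t_j)|,
\]
so summing and using that each partial sum is bounded by the corresponding full variation yields the claim. For (ii) I would split the increment via
\[
f(b)g(b)-f(a)g(a)=f(b)\bigl(g(b)-g(a)\bigr)+g(a)\bigl(f(b)-f(a)\bigr)
\]
evaluated at $a=\gamma(t_j)$, $b=\gamma(t_{j+1})$, bound $|f(b)|$ and $|g(a)|$ by $\|f\|_{\infty}$ and $\|g\|_{\infty}$, and sum. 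Property (iii) is immediate since $|\alpha|$ factors out of every summand; because the factor is constant, the supremum scales exactly, giving equality rather than mere inequality.

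The heart of the argument is (iv), the additivity over a concatenation $\gamma=\gamma_1\circ\gamma_2$, and this is where I expect the only genuine difficulty. The issue is that an admissible partition of $\gamma$ need not contain the junction point $\gamma_1(1)$. I would prove both inequalities separately. For ``$\le$'', given any admissible partition of $\gamma$, I would refine it by inserting the parameter value corresponding to $\gamma_1(1)$; this insertion is legitimate precisely because the hypothesis $\gamma_1(1)\in\sigma$ makes the new point admissible, and inserting a point can only increase the variation sum by the triangle inequality. The refined partition then splits at the junction into an admissible partition of $\gamma_1$ and one of $\gamma_2$, so its sum is at most ${\rm cvar}(g,\gamma_1,\sigma)+{\rm cvar}(g,\gamma_2,\sigma)$. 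For ``$\ge$'', I would take admissible partitions of $\gamma_1$ and of $\gamma_2$ separately and glue them at the shared endpoint $\gamma_1(1)$ into a single admissible partition of $\gamma$, whose variation sum equals the sum of the two pieces; taking suprema independently gives the reverse inequality, and the two together yield equality.

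Finally, (v) follows at once from (iv) since ${\rm cvar}(g,\gamma_2,\sigma)\ge 0$ forces ${\rm cvar}(g,\gamma_1,\sigma)\le{\rm cvar}(g,\gamma,\sigma)$. Property (vi) is a pure set-inclusion argument: for $\sigma_1\subset\sigma$, every partition with all points in $\sigma_1$ also has all points in $\sigma$, so $\Lambda(\gamma,\sigma_1)\subseteq\Lambda(\gamma,\sigma)$ and the supremum over the smaller collection cannot exceed that over the larger. The only place demanding real care is the junction handling in (iv), where the compactness-free but essential hypothesis $\gamma_1(1)\in\sigma$ is used; everything else is a routine transfer of classical one-variable total-variation facts through $f\circ\gamma$.
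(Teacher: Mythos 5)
Your proposal is correct, but note that the paper does not prove this proposition at all: it is quoted verbatim from \cite[Proposition 3.2]{ashton2005functions}, so there is no internal proof to compare against. Your argument — termwise triangle/product estimates for (i)--(iii), insertion of the junction point $\gamma_1(1)$ (admissible precisely because $\gamma_1(1)\in\sigma$) to get both inequalities in (iv), and then (v), (vi) from nonnegativity and inclusion of partition families — is exactly the standard proof given in the cited reference, and you correctly identified the only delicate step.
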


\begin{definition}
The compact and connected sets $\sigma_1,\sigma_2$ are said to be {\bf adjacent} if $\sigma_1\cap\sigma_2\neq\emptyset$ and $int(\sigma_1\cap\sigma_2)=\emptyset$.
\end{definition}

Now we note the following property for the ${\rm cvar}(f,\gamma,\sigma)$.

\begin{proposition}\label{procvar2}
(\cite[Theorem 4.9]{gimenez2014functions})
Let $\sigma_1,\sigma_2$ be two compact and connected adjacent sets. Then, for any $f:\sigma_1\cup\sigma_2\to\mathbb{R}$,
\begin{eqnarray}
{\rm cvar}(f,\gamma,\sigma_1\cup\sigma_2)={\rm cvar}(f,\gamma,\sigma_1)+{\rm cvar}(f,\gamma,\sigma_2).\nonumber
\end{eqnarray}
\end{proposition}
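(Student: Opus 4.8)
The plan is to transport everything to the parameter interval. Writing $h := f\circ\gamma$, $A_1:=\gamma^{-1}(\sigma_1)$ and $A_2:=\gamma^{-1}(\sigma_2)$, the remark following the definition of ${\rm cvar}$ lets me read ${\rm cvar}(f,\gamma,\sigma_i)$ as the one–dimensional variation $V(h;A_i):=\sup\sum_j|h(t_{j+1})-h(t_j)|$, the supremum taken over finite increasing sequences $t_1<\dots<t_n$ whose points lie in $A_i$; likewise ${\rm cvar}(f,\gamma,\sigma_1\cup\sigma_2)=V(h;A_1\cup A_2)$, while $A_1\cap A_2=\gamma^{-1}(\sigma_1\cap\sigma_2)$. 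Since the $\sigma_i$ are compact (hence closed) and $\gamma$ is continuous, $A_1,A_2$ are closed. I would prove the identity as two inequalities, the nontrivial content of both being a single geometric fact.

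That fact is a crossing lemma: if $a<b$, $\gamma([a,b])\subseteq\sigma_1\cup\sigma_2$, $\gamma(a)\in\sigma_1\setminus\sigma_2$ and $\gamma(b)\in\sigma_2\setminus\sigma_1$, then some $\tau\in(a,b)$ satisfies $\gamma(\tau)\in\sigma_1\cap\sigma_2$. I would prove it by setting $\tau:=\sup\{t\in[a,b]:\gamma(t)\in\sigma_1\}$: closedness of $\sigma_1$ gives $\gamma(\tau)\in\sigma_1$ and forces $\tau<b$, while for $t\downarrow\tau$ the images lie in $\sigma_2$ (the curve stays in the union and has just left $\sigma_1$), so closedness of $\sigma_2$ gives $\gamma(\tau)\in\sigma_2$. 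This is exactly where compactness and the adjacency hypothesis (that $\sigma_1\cap\sigma_2$ is the genuine interface between the connected regions $\sigma_1,\sigma_2$) enter. For the upper bound $V(h;A_1\cup A_2)\le V(h;A_1)+V(h;A_2)$ I take any increasing sequence in $A_1\cup A_2$ and refine it: whenever two consecutive points are \emph{mixed} — one in $A_1\setminus A_2$ and the next in $A_2\setminus A_1$ — the crossing lemma inserts a point of $A_1\cap A_2$ between them. After refinement no consecutive pair joins a pure-$\sigma_1$ point to a pure-$\sigma_2$ point, so every consecutive difference can be charged either to a subsequence lying in $A_1$ or to one lying in $A_2$; summing, and using that discarding the complementary differences only decreases the total, bounds the original sum by $V(h;A_1)+V(h;A_2)$. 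Taking the supremum yields ``$\le$''.

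For the reverse inequality I would start from near-optimal sequences $P_1\subset A_1$ and $P_2\subset A_2$ and merge them into one increasing sequence in $A_1\cup A_2$; the point is that the same crossing lemma guarantees the transitions between the $P_1$- and $P_2$-parts occur across shared points of $A_1\cap A_2$, so inserting those crossing points makes the variation of the merged sequence split as the sum of the two separate variations rather than cancelling, hence dominate $V(h;A_1)+V(h;A_2)-\varepsilon$. The main obstacle is precisely the assumption lurking inside the crossing lemma: the argument needs consecutive union-points to be joined by an arc of $\gamma$ that does not leave $\sigma_1\cup\sigma_2$, for otherwise a mixed jump need not straddle the interface $\sigma_1\cap\sigma_2$ and neither inequality is forced. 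Controlling (or excluding) such excursions, via the connectedness of the $\sigma_i$ and the structure of $\gamma^{-1}(\sigma_1\cup\sigma_2)$, is the delicate step; the bookkeeping of charging the differences to $A_1$ or $A_2$ is then routine once the crossing points are in place.
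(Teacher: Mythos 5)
The paper offers no proof of this proposition --- it is quoted from \cite[Theorem 4.9]{gimenez2014functions} and used as a black box --- so there is no in-paper argument to compare yours against; I can only judge the proposal on its own terms, and it has genuine gaps. Your reduction to the one-dimensional quantities $V(h;A_i)$ with $h=f\circ\gamma$, $A_i=\gamma^{-1}(\sigma_i)$ is correct, and your charging argument for ``$\le$'' is the right skeleton, but it hinges on the crossing lemma, which (as you yourself flag) needs the arc of $\gamma$ between two consecutive marked points to stay inside $\sigma_1\cup\sigma_2$. The definition of a partition of $\gamma$ over a set constrains only the marked points $\gamma(t_j)$, not the arc between them, so a curve that runs from the interior of $\sigma_1$ out of $\sigma_1\cup\sigma_2$ and back into the interior of $\sigma_2$ while avoiding $\sigma_1\cap\sigma_2$ yields a two-point partition whose single difference is seen by ${\rm cvar}(f,\gamma,\sigma_1\cup\sigma_2)$ but by neither summand; ``$\le$'' is simply false without an extra hypothesis such as $\gamma([0,1])\subseteq\sigma_1\cup\sigma_2$. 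You identified this as ``the delicate step'' but did not close it, and it cannot be closed from compactness, connectedness and adjacency alone.

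The reverse inequality is in worse shape: the merging argument fails even for curves confined to $\sigma_1\cup\sigma_2$. Take $\sigma_1=[0,1]^2$, $\sigma_2=[1,2]\times[0,1]$ (adjacent along $\{1\}\times[0,1]$), and an injective piecewise linear $\gamma$ lying in $\sigma_1$ for $t\in[0,\tfrac13]$, in $\sigma_2$ for $t\in[\tfrac13,\tfrac23]$, and in $\sigma_1$ again for $t\in[\tfrac23,1]$, meeting the interface only at $t=\tfrac13,\tfrac23$; define $f$ on the trace by $f(\gamma(t))=t$ (extended arbitrarily). Then $h$ is monotone, so ${\rm cvar}(f,\gamma,\sigma_1\cup\sigma_2)=1$, yet the partition $\{0,1\}$ of $\gamma$ over $\sigma_1$ already gives ${\rm cvar}(f,\gamma,\sigma_1)=1$, and $\{\tfrac13,\tfrac23\}$ gives ${\rm cvar}(f,\gamma,\sigma_2)\ge\tfrac13$: a partition over $\sigma_1$ may pair points from different visits to $\sigma_1$ and thereby absorb the variation accrued inside $\sigma_2$, so the right-hand side strictly exceeds the left and no merging of near-optimal partitions can rescue ``$\ge$''. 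The identity does hold, by exactly the classical one-dimensional additivity you invoke, when each $\gamma^{-1}(\sigma_i)$ is a single subinterval of $[0,1]$ and the two subintervals share one endpoint --- which is the structure actually present where the paper applies the proposition (the curves $g_i\circ\gamma$ concatenated into $\widetilde{\gamma}$ visit each $I_i$ in one block). A correct proof must isolate and assume that structural hypothesis on $\gamma$ relative to the decomposition, not just adjacency of the $\sigma_i$.
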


\begin{lemma}\label{lemcvar2}
Let $\sigma_1\subset \sigma$ be a nonempty compact on $\mathbb{R}^2$, $f:\sigma\to\mathbb{R}$ and $\gamma\in\Gamma$. Assume that $g:\sigma_1\to g(\sigma_1)$ is a one-to-one map. Then,
$$
{\rm cvar}(f\circ g,\gamma, \sigma_1) = {\rm cvar}(f,g\circ\gamma, g(\sigma_1))
$$
\end{lemma}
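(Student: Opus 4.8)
The plan is to reduce both sides to suprema of identical sums taken over the same family of parameter partitions, using the parametric reformulation of ${\rm cvar}$ recorded in the Remark. First I would write, for the left-hand side,
\[
{\rm cvar}(f\circ g,\gamma,\sigma_1)=\sup_{\substack{\{t_j\}_{j=1}^{n}\in\Lambda([0,1])\\ \gamma(t_j)\in\sigma_1}}\sum_{j=1}^{n-1}\bigl|(f\circ g)\circ\gamma(t_{j+1})-(f\circ g)\circ\gamma(t_j)\bigr|,
\]
and, noting that $g\circ\gamma\in\Gamma$, for the right-hand side,
\[
{\rm cvar}(f,g\circ\gamma,g(\sigma_1))=\sup_{\substack{\{t_j\}_{j=1}^{n}\in\Lambda([0,1])\\ (g\circ\gamma)(t_j)\in g(\sigma_1)}}\sum_{j=1}^{n-1}\bigl|f\circ(g\circ\gamma)(t_{j+1})-f\circ(g\circ\gamma)(t_j)\bigr|.
\]

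The crucial elementary observation is that composition is associative, so $(f\circ g)\circ\gamma=f\circ(g\circ\gamma)$ on the set where $\gamma$ takes values in $\sigma_1$; consequently the two summands agree term by term for any common parameter partition $\{t_j\}$. It therefore remains only to show that the two suprema range over exactly the same collection of admissible partitions, namely that a partition $\{t_j\}$ satisfies $\gamma(t_j)\in\sigma_1$ for every $j$ if and only if it satisfies $(g\circ\gamma)(t_j)\in g(\sigma_1)$ for every $j$.

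For this equivalence I would argue pointwise. If $\gamma(t_j)\in\sigma_1$ then $(g\circ\gamma)(t_j)=g(\gamma(t_j))\in g(\sigma_1)$ by the very definition of the image, which settles the forward implication and already yields ${\rm cvar}(f\circ g,\gamma,\sigma_1)\leq {\rm cvar}(f,g\circ\gamma,g(\sigma_1))$. The reverse implication is where the injectivity hypothesis enters: since $g$ is one-to-one on $\sigma_1$ it is a bijection of $\sigma_1$ onto $g(\sigma_1)$ and admits an inverse $g^{-1}\colon g(\sigma_1)\to\sigma_1$, so that $(g\circ\gamma)(t_j)\in g(\sigma_1)$ forces $\gamma(t_j)=g^{-1}\bigl((g\circ\gamma)(t_j)\bigr)\in\sigma_1$. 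With both inclusions established the two admissibility constraints coincide, the suprema are taken over identical sets of partitions with identical summands, and the claimed equality follows.

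The step I expect to require the most care is this matching of the constraint sets: one must be attentive to the fact that $g$ is defined on $\sigma_1$, so that a partition point of the curve $g\circ\gamma$ lying in $g(\sigma_1)$ corresponds, via injectivity, to a unique partition point of $\gamma$ in $\sigma_1$, and conversely. A secondary point worth checking before the argument begins is that $g\circ\gamma$ is genuinely an element of $\Gamma$ (continuous and piecewise $C^1$), so that ${\rm cvar}(f,g\circ\gamma,g(\sigma_1))$ is well defined; granting the regularity of the maps $g$ used elsewhere in the paper this is routine, but it should be noted explicitly.
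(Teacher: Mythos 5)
Your proposal is correct and follows essentially the same route as the paper's own proof: rewrite both sides via the parametric form of ${\rm cvar}$, observe the summands coincide by associativity of composition, and use injectivity of $g$ to identify the two constraint sets $\{\gamma(t_j)\in\sigma_1\}$ and $\{g\circ\gamma(t_j)\in g(\sigma_1)\}$. You simply spell out the injectivity step and the membership of $g\circ\gamma$ in $\Gamma$ more explicitly than the paper does.
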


\begin{proof}
\begin{eqnarray}
{\rm cvar}(f\circ g,\gamma,\sigma_1)
&=& \sup_{\substack{\{t_j\}_{j=1}^{n} \in\Lambda([0,1]) \\ \gamma(t_j)\in\sigma_1}}\sum_{j=1}^{n-1}|f\circ g(\gamma(t_{j+1}))-f\circ g(\gamma(t_j))|\nonumber\\
&=& \sup_{\substack{\{t_j\}_{j=1}^{n} \in\Lambda([0,1]) \\ g\circ\gamma(t_j)\in g(\sigma_1)}}\sum_{j=1}^{n-1}|f( g\circ\gamma(t_{j+1}))-f(g\circ\gamma(t_j))|\nonumber\\
&=& {\rm cvar}(f,g\circ\gamma, g(\sigma_1))\nonumber
\end{eqnarray}
\end{proof}

\begin{definition}
Let $\mathcal{C}$ be the set of all convex closed Jordan curve on $\mathbb{R}^2$. Then $t\in[0,1]$ is said to be an {\bf entry point} of $\gamma\in\Gamma$ on a curve $c\in\mathcal{C}$ if either
\begin{itemize}
\item[(i)] $t=0$ and $\gamma(0)\in c$, or
\item[(ii)] $\gamma(t)\in c$ and for all $\varepsilon >0$ there exists $s\in (t-\varepsilon,t)\cap [0,1]$ such that $\gamma(s)\notin c$.
\end{itemize}
Set ${\rm vf}(\gamma,c)$ to be the number of entry points of $\gamma$ on $c\in\mathcal{C}$ and ${\rm vf}(\gamma)$ to be the supremum of ${\rm vf}(\gamma,c)$ over all convex closed Jordan curves $c$, that is,
\begin{eqnarray}
{\rm vf}(\gamma):=\sup_{c\in\mathcal{C}}{\rm vf}(\gamma,c).
\end{eqnarray}
\end{definition}
\begin{remark}
In \cite{ashton2005functions}, ${\rm vf}(\gamma,c)$ is defined by lines instead of curves, but we need the definition by curves for our main theorem.
\end{remark}

\begin{definition}
Let $f:\sigma\to\mathbb{R}$. The {\bf variation} of $f$ on $\sigma$ is defined by
\begin{eqnarray}
{\rm Var}(f,\sigma):=\sup_{\gamma\in\Gamma}\frac{{\rm cvar}(f,\gamma,\sigma)}{{\rm vf}(\gamma)}.
\end{eqnarray}
If $\gamma\in\Gamma$ satisfies ${\rm vf}(\gamma)=\infty$ and ${\rm cvar}(f,\gamma,\sigma)=\infty$, then we define ${\rm cvar}(f,\gamma,\sigma)/{\rm vf}(\gamma)=0$.
\end{definition}

The following properties for the variation define above are well-known.

\begin{proposition}\label{provar}
Let $\sigma_1\subset\sigma$ be a nonempty compact subset of $\mathbb{R}^2$, $f,g:\sigma\to\mathbb{R}$ and $\alpha\in\mathbb{R}$. Then,
\begin{itemize}
\item[(i)] ${\rm Var}(f+g,\sigma)\leq {\rm Var}(f,\sigma) + {\rm Var}(g,\sigma)$,
\item[(ii)] ${\rm Var}(fg,\sigma)\leq \|f\|_{\infty}{\rm Var}(g,\sigma)+\|g\|_{\infty}{\rm Var}(f,\sigma)$,
\item[(iii)] ${\rm Var}(\alpha f,\sigma)=|\alpha|{\rm Var}(f,\sigma)$,
\item[(iv)] ${\rm Var}(f,\sigma_1)\leq{\rm Var}(f,\sigma)$.
\end{itemize}
\end{proposition}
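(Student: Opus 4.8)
The plan is to deduce all four statements from the corresponding curve-wise inequalities for ${\rm cvar}$ collected in Proposition \ref{procvar}, by dividing each through by ${\rm vf}(\gamma)$ --- a quantity depending only on the curve $\gamma$, and not on $f$, $g$, $\alpha$, or the region --- and then taking the supremum over $\gamma\in\Gamma$. The only general facts I would invoke about suprema are the subadditivity $\sup_\gamma(a_\gamma+b_\gamma)\le\sup_\gamma a_\gamma+\sup_\gamma b_\gamma$, the positive homogeneity $\sup_\gamma(c\,a_\gamma)=c\,\sup_\gamma a_\gamma$ for a constant $c\ge0$, and monotonicity of the supremum under a curve-wise inequality.

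Concretely, for (i) I would fix $\gamma\in\Gamma$, apply Proposition \ref{procvar}(i), and divide by ${\rm vf}(\gamma)$ to get
\[
\frac{{\rm cvar}(f+g,\gamma,\sigma)}{{\rm vf}(\gamma)}
\le \frac{{\rm cvar}(f,\gamma,\sigma)}{{\rm vf}(\gamma)}+\frac{{\rm cvar}(g,\gamma,\sigma)}{{\rm vf}(\gamma)}
\le {\rm Var}(f,\sigma)+{\rm Var}(g,\sigma);
\]
taking the supremum over $\gamma$ on the left then gives (i). Part (ii) follows in exactly the same way from Proposition \ref{procvar}(ii), the factors $\|f\|_\infty,\|g\|_\infty\ge0$ passing through the supremum by positive homogeneity. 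For (iii) I would start from the curve-wise \emph{equality} in Proposition \ref{procvar}(iii); dividing by ${\rm vf}(\gamma)$ and using $\sup_\gamma|\alpha|r_\gamma=|\alpha|\sup_\gamma r_\gamma$ (together with the trivial case $\alpha=0$) upgrades the inequality to the equality ${\rm Var}(\alpha f,\sigma)=|\alpha|\,{\rm Var}(f,\sigma)$. Finally (iv) is immediate from Proposition \ref{procvar}(vi): for $\sigma_1\subset\sigma$ and each $\gamma$ one has ${\rm cvar}(f,\gamma,\sigma_1)\le{\rm cvar}(f,\gamma,\sigma)$, and since ${\rm cvar}(\cdot,\gamma,\sigma_1)$ only probes the values of $f$ at the points $\gamma(t_j)\in\sigma_1$ it is harmless to replace $f$ by its restriction $f|_{\sigma_1}$; dividing by ${\rm vf}(\gamma)$ and taking suprema yields ${\rm Var}(f,\sigma_1)\le{\rm Var}(f,\sigma)$.

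The only point needing real care --- and the step I would expect to be the main, if mild, obstacle --- is the bookkeeping forced by the convention that ${\rm cvar}(f,\gamma,\sigma)/{\rm vf}(\gamma)=0$ when both are infinite. For any curve with ${\rm vf}(\gamma)=\infty$ every ratio above collapses to $0$, so such curves contribute nothing to the suprema and the displayed inequalities hold trivially; for curves with ${\rm vf}(\gamma)$ finite and positive the divisions are ordinary and the inequalities of Proposition \ref{procvar} are preserved verbatim. I would also record that ${\rm vf}(\gamma)\ge1$ for any nondegenerate curve (such a curve always admits at least one entry point on a suitable convex closed Jordan curve), so that no division by zero can occur. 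Once these conventions are pinned down, the argument is a routine transfer of Proposition \ref{procvar} across the quotient and the supremum, and I do not anticipate any substantive difficulty.
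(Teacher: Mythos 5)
Your proposal is correct and is exactly the argument the paper has in mind: the paper's proof of Proposition \ref{provar} consists of the single remark that all four properties ``follow immediately from Proposition \ref{procvar}'', which is precisely your divide-by-${\rm vf}(\gamma)$-and-take-suprema transfer. Your extra care about the convention for curves with ${\rm vf}(\gamma)=\infty$ is a reasonable refinement but does not change the route.
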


\begin{proof}
The proof of all properties follows immediately from Proposition \ref{procvar}. See also \cite{ashton2005functions}.
\end{proof}

Finally, we state and prove  the following lemma in order to prove our main theorem.

\begin{lemma}\label{lemvar3}
Let $\sigma\subset\mathbb{R}^2$ be a compact set. Assume $g:\mathbb{R}^2\to\mathbb{R}$ is a $C^1$ function. If there exists a constant $C>0$ such that $|g_x(x,y)|\leq C$ and $|g_y(x,y)|\leq C$ for any $(x,y\in Int(\sigma))$, then
${\rm Var}(g,\sigma)$ is bounded.
\end{lemma}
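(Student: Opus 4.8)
The plan is to reduce the general $C^1$ function $g$ to the two coordinate projections $p_1(x,y)=x$ and $p_2(x,y)=y$, and then to settle those by a geometric crossing argument that exploits the convexity built into ${\rm vf}$. Let $R=[a,b]\times[c,d]$ be a rectangle containing $\sigma$. Since $g$ is $C^1$ on $\mathbb{R}^2$ and $R$ is compact, its partials are bounded on $R$ by some finite $\tilde C$; this is the quantitative content of the hypothesis on $Int(\sigma)$, except that I would want the bound on all of $R$ because the mean value points produced below need not lie in $\sigma$. For any $P=(x,y),P'=(x',y')\in\sigma$, writing $g(x',y')-g(x,y)=[g(x',y')-g(x,y')]+[g(x,y')-g(x,y)]$ and applying the one-variable mean value theorem to each bracket gives $|g(P')-g(P)|\le \tilde C(|x'-x|+|y'-y|)$, the intermediate points lying in $R$. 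Summing over any partition of $\gamma$ with points in $\sigma$ and taking the supremum yields
\[
{\rm cvar}(g,\gamma,\sigma)\le \tilde C\big({\rm cvar}(p_1,\gamma,\sigma)+{\rm cvar}(p_2,\gamma,\sigma)\big).
\]
Dividing by ${\rm vf}(\gamma)$ and taking the supremum over $\gamma\in\Gamma$ reduces the lemma to showing ${\rm Var}(p_1,\sigma)<\infty$ and ${\rm Var}(p_2,\sigma)<\infty$.

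By symmetry it suffices to bound ${\rm Var}(p_1,\sigma)$, i.e.\ to show ${\rm cvar}(p_1,\gamma,\sigma)\le \mathrm{const}\cdot{\rm vf}(\gamma)$ for every $\gamma$. I would fix $\gamma$ and a partition with $\gamma(t_j)\in\sigma$, and set $x_j=p_1(\gamma(t_j))\in[a,b]$. The elementary identity $\sum_j|x_{j+1}-x_j|=\int_a^b N(m)\,dm$, where $N(m)$ counts the indices $j$ with $m$ strictly between $x_j$ and $x_{j+1}$, converts the coordinate variation into an integral over levels $m$. Everything then hinges on bounding $N(m)$ by ${\rm vf}(\gamma)$ for almost every $m$.

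This is the heart of the argument and the step I expect to be the main obstacle. For a regular value $m$ of the piecewise-$C^1$ map $t\mapsto p_1(\gamma(t))$ (almost every $m$, by Sard), each index counted by $N(m)$ forces at least one transversal crossing of the vertical line $\{x=m\}$ in its parameter subinterval, and these subintervals are disjoint, so the total number of such crossings is at least $N(m)$; since up- and down-crossings differ by at most one, there are at least $(N(m)-1)/2$ up-crossings. I would then enlarge $R$ to a rectangle $R'$ whose interior contains the (compact) image $\gamma([0,1])$, and take $c_m$ to be the boundary of the convex rectangle $\{x\ge m\}\cap R'$. Because $\gamma$ never leaves $R'$, an up-crossing of $\{x=m\}$ is precisely an entry point of $\gamma$ on $c_m$, whence the number of up-crossings is at most ${\rm vf}(\gamma,c_m)\le {\rm vf}(\gamma)$, giving $N(m)\le 2\,{\rm vf}(\gamma)+1$. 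Integrating over $m\in[a,b]$ produces ${\rm cvar}(p_1,\gamma,\sigma)\le (b-a)\big(2\,{\rm vf}(\gamma)+1\big)$.

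Finally, every $\gamma\in\Gamma$ has ${\rm vf}(\gamma)\ge 1$, since a small convex closed Jordan curve through $\gamma(0)$ contributes an entry point by case (i) of the entry-point definition; hence dividing the last estimate by ${\rm vf}(\gamma)$ gives ${\rm Var}(p_1,\sigma)\le 3(b-a)$, and symmetrically ${\rm Var}(p_2,\sigma)\le 3(d-c)$. Combined with the reduction step this bounds ${\rm Var}(g,\sigma)$ by $3\tilde C\big((b-a)+(d-c)\big)<\infty$. The only delicate bookkeeping is the transversality and the up/down counting, which I would handle by working with regular values of the coordinate and absorbing the $O(1)$ endpoint effects into the additive constant; the conceptual point is that choosing the comparison curves $c_m$ to be boundaries of half-plane-type convex regions is exactly what lets the convexity in the definition of ${\rm vf}$ control the one-dimensional oscillation of each coordinate.
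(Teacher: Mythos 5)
Your proof is correct, but it follows a genuinely different and more self-contained route than the paper. The paper bounds ${\rm cvar}(g,\gamma,\sigma)$ by the line integral $\int_0^1|(g\circ\gamma)'(t)|\,dt$ (using that $\gamma$ is piecewise $C^1$), estimates the integrand by $C(|dx|+|dy|)$, and then closes the argument by asserting that $C\,{\rm Var}(x+y,\sigma)$ is bounded --- i.e.\ it outsources the real geometric content to the known finiteness of the variation of the coordinate(-sum) function in the Ashton--Doust framework. You instead reduce to the projections $p_1,p_2$ by the mean value theorem (which works for merely continuous $\gamma$ and correctly notes that the hypothesis on $Int(\sigma)$ must be upgraded to a bound on a compact rectangle, a point the paper glosses over), and then you actually \emph{prove} the finiteness of ${\rm Var}(p_i,\sigma)$ via the Banach-indicatrix identity $\sum_j|x_{j+1}-x_j|=\int N(m)\,dm$ together with the observation that each level-$m$ sign change forces an entry point of $\gamma$ on the convex curve $\partial(\{x\ge m\}\cap R')$. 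In effect you supply the proof of the very fact the paper cites, which is a real gain in completeness. Two simplifications are available: Sard's theorem and transversality are unnecessary, since for each fixed partition it suffices to take $m$ outside the finite set $\{x_j\}$ and to use the \emph{first} hitting time of the line $\{x=m\}$ in each subinterval, which is an entry point by clause (ii) of the definition regardless of crossing direction; and because the paper's ${\rm vf}(\gamma,c)$ counts touchings of the curve $c$ rather than entries into the region it bounds, both up- and down-crossings yield entry points, so $N(m)\le{\rm vf}(\gamma)$ directly and the up/down bookkeeping and the $+1$ can be dropped, giving ${\rm Var}(p_1,\sigma)\le b-a$.
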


\begin{proof}
\begin{eqnarray}
\frac{{\rm cvar}(g,\gamma,\sigma)}{{\rm vf}(\gamma)}&=&\frac{1}{{\rm vf}(\gamma)}\sup_{\{(x_i,y_i)\}_{j=1}^{n} \in\Lambda(\gamma,\sigma)}\sum_{j=1}^{n-1}|g((x_{j+1},y_{j+1})-g(x_j,y_j)|\nonumber\\
&=&\frac{1}{{\rm vf}(\gamma)}\sup_{\{(t_i)\}_{j=1}^{n} \in\Lambda([0,1])}\sum_{j=1}^{n-1}|g\circ\gamma(t_{j+1})-g\circ\gamma(t_j)|.\nonumber
\end{eqnarray}
Since $g$ is a $C^1$ function and $\gamma$ is a piecewise $C^1$ curve, then we have
\begin{eqnarray}
&\leq&\frac{1}{{\rm vf}(\gamma)}\int_{0}^{1}|(g\circ\gamma)'(t)|dt\nonumber\\
&=&\frac{1}{{\rm vf}(\gamma)}\int_{\gamma}|(g_x(x,y)\frac{dx}{dt}+g_y(x,y)\frac{dy}{dt})|dt\nonumber\\
&=&\frac{1}{{\rm vf}(\gamma)}\int_{\gamma}|g_x(x,y)||dx|+|g_y(x,y)||dy|\nonumber\\
&\leq&\frac{C}{{\rm vf}(\gamma)}\int_{\gamma}(|dx|+|dy|)\nonumber\\
&\leq&\frac{C}{{\rm vf}(\gamma)}\int_{0}^1|\gamma'(t)||dt|\nonumber\\
&\leq& C{\rm Var}(x+y,\sigma),\nonumber
\end{eqnarray}
which is bounded.  Thus ${\rm Var}(g,\sigma)$ is bounded.
\end{proof}

\section{Main theorem}\label{sec:main}

Gora and Boyarsky \cite{gora1989absolutely} gave a sufficient condition for asymptotic (statistical) periodicity in piecewise $C^2$ maps on $\mathbb{R}^N$ using a general definition of the total variation. Their assumptions are stronger than ours since they assume that the map is expanding in all directions and thus  all eigenvalues of the Jacobian are larger than one.

Our main result gives a sufficient condition for asymptotic periodicity of more general piecewise $C^2$ maps on $\mathbb{R}^2$, that are not necessarily expanding in all directions,  by using the definition of variation constructed by line integration as introduced in \cite{gimenez2014functions}. Let $X \subset \mathbb{R}^2$ be a connected compact subset.

\begin{theorem}\label{maintheorem}
Let $S:X\to X$ satisfy the following conditions:
\begin{itemize}
\item[(i)] There is a partition $I_1,I_2,\cdots,I_r$ of $X$ such that for each $i=1,\cdots,r$,
\begin{itemize}
\item the restricted map $S|_{int(I_i)}$ is a $C^2$ and one-to-one function,
\item each boundary $\partial (I_i)$ is a piecewise $C^2$ curve having a finite boundary length,
\item the set $S(I_i)$ is convex;
\end{itemize}
\item[(ii)] For $i=1,\cdots,r$, each Jacobian $J_i(x,y)$ of $S|_{int(I_i)}$ satisfies
\begin{eqnarray}
J_i(x,y)\geq\lambda>1\ \ \ {\it for}\ \ (x,y)\in int(I_i);\nonumber
\end{eqnarray}
\item[(iii)] There are real constants $C'>0$ such that, for $i=1,\cdots,r$,
\begin{eqnarray}
\left|\frac{\partial}{\partial x}J_i^{-1}(x,y)\right|\leq C'<\infty,\ \ \ {\it for}\ \ (x,y)\in int(I_i),\nonumber\\
\left|\frac{\partial}{\partial y}J_i^{-1}(x,y)\right|\leq C'<\infty,\ \ \ {\it for}\ \ (x,y)\in int(I_i);\nonumber
\end{eqnarray}
\item[(iv)] There exists $C>0$ such that for any curves $\gamma$ on $X$, a curve $\widetilde{\gamma}$ constructed by connecting all curves $\{S|_{int(I_i)}^{-1}(\gamma)\}_{i=1}^r$ whose length is minimal satisfies
\begin{eqnarray}
\sup_{\gamma\in\Gamma}\frac{{\rm vf}(\widetilde{\gamma})}{{\rm vf}(\gamma)}\leq C;\nonumber
\end{eqnarray}
\item[(v)] The numbers $\lambda,C$  satisfy
$$
\frac{C}{\lambda}<1.
$$
\end{itemize}
Let $P$ be the Frobenius-Perron operator corresponding to $S$. Then, for all $f\in D(X)$, $\{P^nf\}$ is asymptotically periodic.
\end{theorem}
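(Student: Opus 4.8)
The plan is to reduce the claim to the Spectral Decomposition Theorem (Theorem \ref{thm:spec-decomp}): it suffices to prove that $P$ is smoothing, and the standard route to smoothing is a Lasota--Yorke type inequality
\[
{\rm Var}(Pf,X)\leq \frac{C}{\lambda}\,{\rm Var}(f,X)+K\,\|f\|_{L^1},
\]
valid for every density $f$, with $K$ a finite constant depending only on $S$. Condition (v) guarantees that the multiplier $C/\lambda$ of the variation term is strictly less than one, which is exactly what makes the iteration below converge.

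Since each $S|_{int(I_i)}$ is one-to-one, the Frobenius--Perron operator has the explicit form $Pf=\sum_{i=1}^{r}\big(f\circ S_i^{-1}\big)\big(J_i^{-1}\circ S_i^{-1}\big)\mathbf 1_{S(I_i)}$, where $S_i:=S|_{int(I_i)}$. I would first apply subadditivity (Proposition \ref{provar}(i)) to split ${\rm Var}(Pf,X)$ into the $r$ summands, and then the product rule (Proposition \ref{provar}(ii)) to each summand, writing $\phi_i=f\circ S_i^{-1}$ and $\psi_i=J_i^{-1}\circ S_i^{-1}$. The dominant term $\|\psi_i\|_\infty\,{\rm Var}(\phi_i,S(I_i))$ carries the contraction: since $J_i\geq\lambda$ we have $\|\psi_i\|_\infty\leq 1/\lambda$, while Lemma \ref{lemcvar2} converts the variation of $\phi_i$ over $S(I_i)$ into the variation of $f$ measured along the pulled-back curves $S_i^{-1}\circ\gamma$. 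The key point---and the reason the hypothesis is on the Jacobian determinant rather than on the individual eigenvalues---is that after concatenating these branch preimages into the single curve $\widetilde\gamma$ of condition (iv) and normalizing by ${\rm vf}$, condition (iv) bounds the growth of the visit function by $C$, so that $\sum_i{\rm Var}(\phi_i,S(I_i))\leq C\,{\rm Var}(f,X)$, producing the $(C/\lambda)\,{\rm Var}(f,X)$ term.

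The remaining contributions are the terms $\|\phi_i\|_\infty\,{\rm Var}(\psi_i,S(I_i))$ from the product rule and the jumps created by the indicators $\mathbf 1_{S(I_i)}$. For the former, Lemma \ref{lemvar3} together with condition (iii) (the bound $C'$ on the partials of $J_i^{-1}$) shows ${\rm Var}(\psi_i,S(I_i))$ is finite; for the latter, the convexity of $S(I_i)$ in condition (i) is decisive, because $\partial S(I_i)$ is then a convex closed Jordan curve and the number of entry points of any $\gamma$ across it is at most ${\rm vf}(\gamma)$, so the boundary jump contributes a term proportional only to $\|\phi_i\|_\infty$ after dividing by ${\rm vf}(\gamma)$. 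Both of these are multiples of $\|f\|_\infty$, and I expect the main obstacle to be converting these supremum-norm contributions into the $\|f\|_{L^1}$ remainder without spoiling $C/\lambda<1$. The standard device is a Poincar\'e/Sobolev-type inequality $\|f\|_\infty\leq K_0\big({\rm Var}(f,X)+\|f\|_{L^1}\big)$ combined, if necessary, with passing to a high iterate $P^N$: because $S$ expands area by at least $\lambda$, the pieces of the partition for $S^N$ have measure $O(\lambda^{-N})$, which drives the coefficient of $\|f\|_\infty$ to zero and leaves the variation multiplier arbitrarily close to $(C/\lambda)^N<1$.

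With the inequality in hand, iterating and using $\|P^kf\|_{L^1}=\|f\|_{L^1}=1$ for densities gives ${\rm Var}(P^nf,X)\leq (C/\lambda)^n{\rm Var}(f,X)+K\big(1-C/\lambda\big)^{-1}$, so $\limsup_n{\rm Var}(P^nf,X)\leq M$ with $M$ independent of $f$. Thus for $n$ large all iterates $P^nf$ lie in the fixed ball $\{g\in D(X):{\rm Var}(g,X)\leq M+1\}$, which is relatively compact in $L^1(X)$ by the Helly-type compactness of bounded-variation balls for the line-integration variation. Relative compactness in $L^1$ forces uniform integrability of $\{P^nf\}_{n\geq n_0}$, and since $X$ is compact this is precisely the smoothing (constrictive) property with $A=X$. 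Finally, Theorem \ref{thm:spec-decomp} applies and yields the spectral decomposition \eqref{6.1}, whence $\{P^nf\}$ is asymptotically periodic for every $f\in D(X)$.
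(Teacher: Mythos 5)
Your proposal follows essentially the same route as the paper's proof: the same explicit branch decomposition of $P$, the same use of Proposition \ref{procvar}(i)--(ii), Lemma \ref{lemcvar2} to pull the variation back along $g_i\circ\gamma$, the concatenated curve $\widetilde{\gamma}$ with condition (iv) giving the $(C/\lambda)\,{\rm Var}(f,X)$ term, Lemma \ref{lemvar3} with condition (iii) for the Jacobian factor, convexity of $S(I_i)$ to bound the indicator jumps by ${\rm vf}(\gamma)$, and the resulting Lasota--Yorke inequality iterated to a uniform variation bound, then weak precompactness, smoothing, and Theorem \ref{thm:spec-decomp}. The only divergence is at the step you flag as the main obstacle: where you propose a Poincar\'e-type bound $\|f\|_{\infty}\leq K_0({\rm Var}(f,X)+\|f\|_{L^1})$ and a possible passage to a high iterate $P^N$, the paper instead bounds $\sup_{I'_i} f\circ g_i$ by its average over $I'_i$ (invoking the mean value theorem for definite integrals) and changes variables so that $\|f\|_{L^1}=1$ yields the additive constant $L$ directly, leaving the $C/\lambda$ coefficient untouched.
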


\begin{remark}\label{easyex}
Item (ii) implies an area expanding property.
If the system satisfied only condition (ii), we can immediately find a counterexample of non-asymptotically periodic transformations. For example, the piecewise linear map $S(x,y)=(4x,y/2)$ mod 1 has Jacobian $\lambda=2$ but has eigenvalues $4$ and $1/2$. It is clear that the map has no absolutely continuous invariant measure with respect to Lebesgue measure, which means that the corresponding Frobenius-Perron operator is not asymptotically periodic. However, if we take a partition satisfying (i) and the system satisfies (iv) and (v), then such counterexamples can be excluded. Indeed, we find that the factor $\frac{{\rm vf}(\widetilde{\gamma})}{{\rm vf}(\gamma)}$ must be larger than $\lambda$ for the map $S$, and  (v) cannot hold.
\end{remark}

\subsection*{Proof of Theorem \ref{maintheorem}}
First we write the Frobenius-Perron operator $P$ corresponding to $S$ as
 \begin{eqnarray}
Pf(x,y)=\sum_{i=1}^{r}\rho_i(x,y)f(g_i(x,y))1_{I'_i}(x,y),\nonumber
\end{eqnarray}
where $g_i(x,y)=S_i^{-1}(x,y)$ and $\rho_i(x,y)=J_i^{-1}(x,y)$ for $(x,y)\in I'_i$ with $I'_i=S(I_i)$ and $i=1,\cdots,r$. Each $J_i(x,y)$ is a Jacobian on $I'_i$.

We then calculate the variation ${\rm Var}(Pf,X)$ for $f\in D(X)$ of bounded variation, denoted by $\underset{X}{\rm Var}(Pf)$. We first calculate, by (i) of Proposition  \ref{procvar},
\begin{eqnarray}
\underset{X}{\rm Var}(Pf)
&=& \sup_{\gamma\in\Gamma}\frac{1}{{\rm vf}(\gamma)}{\rm cvar}\left(\sum_{i=1}^{r}\rho_i(x,y)f(g_i(x,y))1_{I'_i}(x,y),\gamma,X\right)\nonumber\\
&\leq& \sup_{\gamma\in\Gamma}\frac{1}{{\rm vf}(\gamma)}\sum_{i=1}^{r}{\rm cvar}\left(\rho_i(x,y)f(g_i(x,y))1_{I'_i}(x,y),\gamma,X\right)\nonumber
\end{eqnarray}
By (ii) of Proposition \ref{procvar},
\begin{eqnarray}
{\rm cvar}\left(\rho_i(x,y)f(g_i)\cdot 1_{I'_i},\gamma,X\right)
&\leq& (\sup_{I'_i}\rho_i){\rm cvar}\left(f\circ g_i\cdot 1_{I'_i},\gamma,X\right)+{\rm cvar}\left(\rho_i,\gamma,X\right)\sup_{I'_i}f(g_i)
\nonumber\\
&\leq& \frac{1}{\lambda}{\rm cvar}\left(f\circ g_i\cdot 1_{I'_i},\gamma,X\right)+{\rm cvar}\left(\rho_i,\gamma,X\right)\sup_{I'_i}f(g_i).\nonumber
\end{eqnarray}
By the mean value theorem for definite integrals, we have
\begin{eqnarray}
\sup_{I'_i}f(g_i)\leq\frac{1}{\iint_{I'_i}dxdy}\iint_{I'_i}|f(g_i(x,y))|dxdy.\label{eq6-1}
\end{eqnarray}
Then we have
\begin{eqnarray}
\underset{X}{\rm Var}(Pf) &\leq& \sup_{\gamma\in\Gamma}\frac{1}{{\rm vf}(\gamma)}\left\{\frac{1}{\lambda}\sum_{i=1}^{r}{\rm cvar}(f\circ g_i\cdot 1_{I'_i},\gamma,X)
+\sum_{i=1}^{r}\frac{{\rm cvar}\left(\rho_i,\gamma,X\right)}{\iint_{I'_i}dxdy}\iint_{I'_i}|f(g_i(x,y))|dxdy
\right\}.\nonumber\\
&\leq& \frac{1}{\lambda}\sup_{\gamma\in\Gamma}\frac{1}{{\rm vf}(\gamma)}\sum_{i=1}^{r}{\rm cvar}(f\circ g_i\cdot 1_{I'_i},\gamma,X)\label{eq3-1}\\
&&+\sup_{\gamma\in\Gamma}\frac{1}{{\rm vf}(\gamma)}\sum_{i=1}^{r}\frac{{\rm cvar}\left(\rho_i,\gamma,X\right)}{\iint_{I'_i}dxdy}\iint_{I'_i}|f(g_i(x,y))|dxdy
.\label{eq3-2}
\end{eqnarray}
Since $\underset{X}{\rm Var}(\rho_i)$ is bounded by Lemma \ref{lemvar3}, there is some constant $\hat{C}$ such that
\begin{eqnarray}
\eqref{eq3-2}\leq
\sum_{i=1}^{r}\frac{\hat{C}}{\iint_{I'_i}dxdy}\iint_{I'_i}|f(g_i(x,y))|dxdy
.\nonumber
\end{eqnarray}
Changing the variables by $g_i(x,y)=(\hat{x},\hat{y})$,
\begin{eqnarray}
\leq\sum_{i=1}^{r}\frac{\hat{C}}{\iint_{I'_i}dxdy}\iint_{I_i}f(\hat{x},\hat{y})d\hat{x}d\hat{y}
\leq\max_i\frac{r\hat{C}}{\iint_{I'_i}dxdy} \label{eq6-2}
\end{eqnarray}
since $f\in D(X)$.
We next calculate Eq.\eqref{eq3-1}.
For $i=1,\cdots,r$, $\{\gamma(t_j)\}_{j=0}^{n-1}\subset X$, the sets $A_i,B_i,C_i$ are defined by
\begin{eqnarray}
A_i&:=&\{j=0,\cdots,n-1 \ :\ \gamma(t_j)\in I_i'\quad{\rm and}\quad\gamma(t_{j+1})\in I_i'\},\nonumber\\
B_i&:=&\{j=0,\cdots,n-1 \ :\ {\rm either}\quad \gamma(t_j)\notin I_i'\quad{\rm or}\quad\gamma(t_{j+1})\notin I_i'\},\nonumber\\
C_i&:=&\{j=0,\cdots,n-1 \ :\ \gamma(t_j)\notin I_i'\quad{\rm and}\quad\gamma(t_{j+1})\notin I_i'\}.\nonumber
\end{eqnarray}
Then,
\begin{eqnarray}
{\rm cvar}(f\circ g_i\cdot 1_{I'_i},\gamma,X)&\leq&
\sum_{j\in A_i}| f\circ g_i(\gamma(t_{j+1})-f\circ g_i(\gamma(t_{j})|\label{eq4-1}\\
&&+\sum_{j\in B_I}|\max\{f\circ g_i(\gamma(t_{j+1}),f\circ g_i(\gamma(t_{j})\}|.\label{eq4-2}
\end{eqnarray}
By definition, we have
$$
\eqref{eq4-1}\leq {\rm cvar}(f\circ g_i,\gamma,I'_i),
$$
and
$$
\eqref{eq3-1}\leq \frac{1}{\lambda}\sup_{\gamma\in\Gamma}\frac{1}{{\rm vf}(\gamma)}\sum_{i=1}^{r}{\rm cvar}(f\circ g_i,\gamma,I'_i) +\frac{1}{\lambda}\sum_{i=1}^{r}\sup_{I'_i} \left(f\circ g_i\right)\sup_{\gamma\in\Gamma}\frac{1}{{\rm vf}(\gamma)}\#\{j\in B_i\}.
$$
Now let $\#\{j\in B_i\}$ be $m\leq n$. For this case ${\rm vf}(\gamma)$ must be larger than $m$ since $I'_i$ is a convex closed Jordan curve by assumption (i). Thus we have
\begin{eqnarray}
\eqref{eq3-1}\leq \frac{1}{\lambda}\sup_{\gamma\in\Gamma}\frac{1}{{\rm vf}(\gamma)}\sum_{i=1}^{r}{\rm cvar}(f\circ g_i,\gamma,I'_i) +\frac{1}{\lambda}\sum_{i=1}^{r}\sup_{I'_i} \left(f\circ g_i\right).\label{eq5}
\end{eqnarray}
Using Lemma \ref{lemcvar2},
\begin{eqnarray}
\sum_{i=1}^{r}{\rm cvar}(f\circ g_i,\gamma,I'_i) &=& \sum_{i=1}^{r}{\rm cvar}(f,g_i\circ\gamma,I_i).\nonumber
\end{eqnarray}
Since $g_i\circ\gamma$ is a curve on $I_i$, we can make a new curve $\widetilde{\gamma}$ on $X$ by connecting all curves $g_i\circ\gamma$ for $i=1,\cdots,r$ whose length becomes minimal. Then, by (v) in Proposition \ref{procvar},
\begin{eqnarray}
\sum_{i=1}^{r}{\rm cvar}(f,g_i\circ\gamma,I_i) \leq \sum_{i=1}^{r}{\rm cvar}(f,\widetilde{\gamma},I_i).\nonumber
\end{eqnarray}
Moreover, since $\{I_i\}_{i=1}^r$ are adjacent, by Proposition \ref{procvar2},
\begin{eqnarray}
\sum_{i=1}^{r}{\rm cvar}(f,\widetilde{\gamma},I_i) = {\rm cvar}(f,\widetilde{\gamma},X).
\end{eqnarray}
Thus, by assumption (iv),
\begin{eqnarray}
\underset{X}{\rm Var}(Pf)  &\leq& \frac{1}{\lambda}\sup_{\gamma\in\Gamma}\frac{{\rm vf}(\widetilde{\gamma})}{{\rm vf}(\gamma)}\frac{1}{{\rm vf}(\widetilde{\gamma})}
{\rm cvar}(f,\widetilde{\gamma},X)+\max_i\frac{r(\hat{C}+1)}{\iint_{I'_i}dxdy}\nonumber\\
&=& \frac{C}{\lambda}\sum_{i=1}^{r}\underset{X}{\rm Var}(f)+L,\label{eq2}
\end{eqnarray}
where
$$
L:= \max_i\frac{r(\hat{C}+1)}{\iint_{I'_i}dxdy}
$$
is independent of $f$. Here we use the same procedure for the second term of Eq.\eqref{eq5} as in the calculations from Eq.\eqref{eq6-1} to Eq.\eqref{eq6-2}.
By assumption (v),
\begin{eqnarray}
\underset{X}{\rm Var}(P^nf) &\leq& \left(\frac{C}{\lambda}\right)^n\underset{X}{\rm Var}(f)+
L\sum_{j=0}^{n-1}\left(\frac{C}{\lambda}\right)^j\label{ineq}\\
&<& \left(\frac{C}{\lambda}\right)^n\underset{X}{\rm Var}(f)+\frac{\lambda L}{\lambda-C},\nonumber
\end{eqnarray}
and therefore, for every $f\in D(X)$ of bounded variation,
\begin{eqnarray}
\lim_{n\to\infty} \sup \underset{X}{\rm Var}(P^nf) < K,\nonumber
\end{eqnarray}
where $K>\lambda L/(\lambda-C)$ is independent of $f$.
 Hence, we define $\mathcal{F}$ by
\begin{eqnarray}
\mathcal{F} = \left\{g\in D(X): \underset{X}{\rm Var}(g) \leq K\right\}.\nonumber
\end{eqnarray}
It is clear that for any density $g\in D(X)$ defined on $X$,
$$
g(x,y)-g(\tilde{x},\tilde{y})\leq \underset{X}{\rm Var}(g)
$$
for any $(x,y),(\tilde{x},\tilde{y})\in X$. Since $g\in D(X)$,  there is some $(\tilde{x},\tilde{y})\in X$ such that $g(\tilde{x},\tilde{y})\leq1$ and then we have $g(x)\leq K+1$.
Thus $\mathcal{F}$ is weakly precompact by the criteria 1 in \cite[page 87]{almcmbk94}.
Moreover, by the criteria 3 in \cite[page 87]{almcmbk94}, a set of functions $\mathcal{F}$ is weakly precompact if and only if:  (a) There is an $M<\infty$ such that $\lVert f\rVert_{L^1}\leq M$ for all $f\in\mathcal{F}$; and (b) For every $\varepsilon>0$ there is a $\delta>0$ such that
$$
\int_A\lvert f(x)\rvert d\mu(x)<\varepsilon \quad\quad
\text{if $\mu(A)<\delta$ and $f\in\mathcal{F}$.}
$$
This implies that there is a $\delta>0$ such that
$$
\int_E P^nf(x) d\mu(x)<\varepsilon \quad\quad
\text{if $\mu(E)<\delta$ and $f\in\mathcal{F}$}
$$
which shows $P$ is smoothing and thus asymptotically periodic by Theorem \ref{thm:spec-decomp}.

\qed

\begin{corollary}\label{maincoro}
Let $S:X\to X$ be a transformation and $P$ be the Frobenius-Perron operator corresponding to $S$. If there exists a number $N\in\mathbb{N}$ such that $S^N$ satisfies Conditions (i)-(v) in Theorem \ref{maintheorem}, then, for all $f\in D(X)$, $\{P^nf\}$ is asymptotically periodic.
\end{corollary}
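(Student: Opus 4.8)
The plan is to reduce the statement to a single application of Theorem~\ref{maintheorem} to the map $S^N$, combined with a bootstrapping argument that transfers the smoothing property from $P^N$ to $P$. First I would record the elementary but crucial fact that the Frobenius--Perron operator is multiplicative under composition, so that the Frobenius--Perron operator associated with $S^N$ is exactly $P^N$. By hypothesis $S^N$ satisfies Conditions (i)--(v), so the proof of Theorem~\ref{maintheorem} applies verbatim to $S^N$ and produces constants $A$, $k<1$, $\delta>0$, all \emph{independent of the density}, witnessing that $P^N$ is a smoothing Markov operator: for every set $E$ with $\mu(E)<\delta$ and every density $h$ there is an integer $m_0(h,E)$ with
\[
\int_{E\cup(X\setminus A)}(P^N)^m h\,d\mu\le k\qquad\text{for }m\ge m_0(h,E).
\]

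The heart of the argument is then to upgrade this to a smoothing estimate for $P$ itself, using the finitely many residues modulo $N$. Fix a density $f$ and a set $E$ with $\mu(E)<\delta$. For each $j\in\{0,1,\dots,N-1\}$ the iterate $P^jf$ is again a density, since $P$ is a Markov operator, so I may apply the displayed inequality to the density $h_j:=P^jf$ and obtain a threshold $m_j:=m_0(P^jf,E)$. Writing any large $n$ in the form $n=Nm+j$ with $0\le j\le N-1$ and noting $P^nf=(P^N)^m(P^jf)$, I set $M:=\max_{0\le j\le N-1}m_j$ and $n_0(f,E):=NM$. If $n\ge n_0$ then $m\ge M\ge m_j$, whence
\[
\int_{E\cup(X\setminus A)}P^nf\,d\mu=\int_{E\cup(X\setminus A)}(P^N)^m(P^jf)\,d\mu\le k.
\]
Since the constants $A$, $k$, $\delta$ are inherited from $P^N$ and do not depend on $f$, this shows that $P$ is smoothing, and the Spectral Decomposition Theorem~\ref{thm:spec-decomp} then gives that $\{P^nf\}$ is asymptotically periodic for every $f\in D(X)$.

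The only point requiring care, and the step I expect to be the main obstacle, is the uniformity of the smoothing constants across the $N$ residue classes. The definition of smoothing permits the threshold $n_0$ to depend on the density and on $E$, but it insists that $A$, $k$, $\delta$ be common to all densities; this is precisely what allows me to take a maximum over the finitely many $m_j$ and obtain a single threshold for $P$. It is also the reason the implication runs in only one direction (a power being smoothing forces the operator to be smoothing, but not conversely in any cheap way). The enabling structural fact throughout is that a Markov operator sends densities to densities, so that each $P^jf$ is a legitimate input to the smoothing hypothesis established for $P^N$.
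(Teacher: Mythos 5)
Your proof is correct, but it takes a genuinely different route from the paper's. The paper argues at the level of the spectral decomposition: from the asymptotic periodicity of $P^N$ (obtained by applying Theorem \ref{maintheorem} to $S^N$) it extracts a period $\tau$ such that $P^{\tau N}$ is exact, notes that $P^{\tau N}$ has the invariant density \eqref{ap-sd}, and then invokes Proposition 5.4 of \cite{toyokawa2020sigma} to transfer constrictiveness from the power $P^{\tau N}$ back to $P$ itself. You instead work directly with the definition of smoothing: after recording that the Frobenius--Perron operator of $S^N$ is $P^N$, you use the fact that the smoothing constants $A$, $k$, $\delta$ produced for $P^N$ are independent of the density, write $n=Nm+j$, feed the finitely many densities $P^jf$ ($0\le j\le N-1$) into the smoothing estimate for $P^N$, and take the maximum of the resulting thresholds to get a single $n_0(f,E)$ for $P$; the Spectral Decomposition Theorem \ref{thm:spec-decomp} then finishes. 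Your version is more elementary and self-contained --- it needs no external result and no detour through exactness of $P^{\tau N}$ --- while the paper's version is shorter on the page but leans on an imported proposition about eventually conservative Markov operators. Your identification of the density-independence of $A$, $k$, $\delta$ as the crux is exactly right: that is the one point where a careless version of this bootstrapping would break down, and it is what legitimizes taking the maximum over the $N$ residue classes.
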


\begin{proof}

By assumption, we find $\{P^{nN}f\}$ is asymptotically periodic for any $f\in D$. Thus one can find a period $\tau<r!$ such that $P^{\tau N}$ is exact. Moreover, we immediately see that $P^{\tau N}$ has an invariant density given by \eqref{ap-sd}. Therefore, by Proposition 5.4 in \cite{toyokawa2020sigma}, $P$ is constrictive and thus asymptotically periodic.
\end{proof}

\section{Two-dimensional example}\label{sec:example}

In this section we offer a new two dimensional example illustrating our results. 

For parameters $\alpha\in\mathbb{R}$ and $\beta\in (1,2]$, consider  the two-dimensional transformation $S:\mathbb{R}^2\to\mathbb{R}^2$ given by
\begin{eqnarray}\label{2dim}
S(x,y)=(y, \alpha y+ T(x) ),\ \ \ {\rm with}\ \ \ T(x)=\begin{cases}
\beta x+\beta+1 & (x<0)\\
-\beta x +\beta+1 & (x\geq 0).
\end{cases}
\end{eqnarray}
Here the transformation $T$ is the generalized tent map, a straightforward modification of \eqref{eq:hat map}.  As we noted previously, $T$ is statistically periodic \cite{provatas91a} and more precisely, the Frobenius-Perron operator corresponding to $T$ has period $2^n$ when the parameter $\beta$ satisfies
\begin{eqnarray}
2^{1/2^{n+1}}<\beta\leq 2^{1/2^n}\ \ \ {\rm for}\ \ \ n=0,1,2,\cdots.\nonumber
\end{eqnarray}
Next we introduce the transformation $\tilde{S}:\mathbb{R}^2\to\mathbb{R}^2$ defined by
\begin{eqnarray}\label{irynatype}
\tilde{S}(x,y)=\begin{cases}
(\alpha x+ y+1, \beta x) & (x<0)\\
(\alpha x+ y+1, -\beta x) & (x\geq 0)
\end{cases}.
\end{eqnarray}
Then, $S$ and $\tilde{S}$ are homeomorphic, i.e. $\tilde{S}\circ h=h\circ S$ holds where
\begin{eqnarray}
h(x,y)=\begin{cases}
(\frac{y}{\beta+1},\frac{\beta x}{\beta+1}) & (x<0)\\
(\frac{y}{\beta+1},\frac{-\beta x}{\beta+1}) & (x\geq 0)
\end{cases}.
\end{eqnarray}

\begin{remark}

The general system \eqref{irynatype} was also considered by Sushko \cite{iryna2008center}, and they noted a border-collision bifurcation \cite{nusse1992border} in the system.
Although a well-known system similar to Eq.\eqref{irynatype} is the Lozi \cite{lozi1978attracteur}  map given by
$$
S_{\rm Lozi}(x,y) =(1-\alpha|x|+y,\beta x),
$$
the model we treat is different.
Note that if the term $-\alpha |x|$ is replaced by $-\alpha x^2$, we obtain the H\'{e}non \cite{henon1976two} map.


Elhadj \cite{elhadj2010new} suggested a similar example as a new two dimensional piecewise linear chaotic map, noting that \eqref{2dim} can also be written in the alternate form 
\begin{eqnarray}\label{1delay}
x_{n+1}=\alpha x_{n}+T(x_{n-1}).
\end{eqnarray}
Indeed, taking a new variable $X_{n}=(x_n,x_{n-1})$, we can write
\begin{eqnarray}
X_{n+1}=(x_{n},x_{n+1})=(x_{n},\alpha x_{n}+T(x_{n-1}))=
\begin{pmatrix}
0 & 1 \\
\alpha & T(\cdot)
\end{pmatrix}
\begin{pmatrix}
x_{n-1} \\
x_{n}
\end{pmatrix}=
\begin{pmatrix}
0 & 1 \\
\alpha & T(\cdot)
\end{pmatrix}
X_{n},\nonumber
\end{eqnarray}
so that the two-dimensional dynamical system $S$ with $X_{n+1}=S(X_n)$ can be represented by (\ref{2dim}).

If we consider the $d$-time delay difference equation, we can construct a $d$-dimensional discrete dynamical system. Losson \cite{losson95} considered a coupled map lattice which induces a high dimensional map to approximate solutions of differential delay equations. They found periodic orbits of an initial point and a periodicity for the evolution of densities analogous to asymptotic periodicity.

\end{remark}

\subsection{Numerical results}

In this section, we numerically study the transformation \eqref{irynatype} to illustrate our results.

Let $P$ be the Frobenius-Perron  operator corresponding to $\tilde{S}$.
In Figure \ref{fig1} (for positive $\alpha$) and Figure \ref{fig2} (for negative $\alpha$)), we show the {\it support} of $\{P^{500} f_0\}$ for an initial density $f_0=1_{[-5,5]\times[-5,5]}$, $\beta=1.1$. and various values of $\alpha$.
 We see there are  disjoint regions, in Figure \ref{fig1} (a),(e),(g),(h),(i),(k) and in Figure \ref{fig2} (a),(d),(f),(g), and they are the signature of  asymptotic periodicity. For example, in Figure \ref{fig1}h there are five disjoint regions: all points in one region are mapped to another region by $\tilde{S}$ and eventually come back to the initial region by $\tilde{S}^5$. Therefore, the two-dimensional map \eqref{irynatype} has many different periods.
 Conversely, the cases in which there is only one component (e.g. Figure \ref{fig1} (b),(c),...) display asymptotic stability, that is  asymptotic periodicity with $r=1$ .

\begin{figure}[tbp]
\begin{center}
 \includegraphics[width=12cm,bb=0mm 20mm 220mm 180mm,clip]{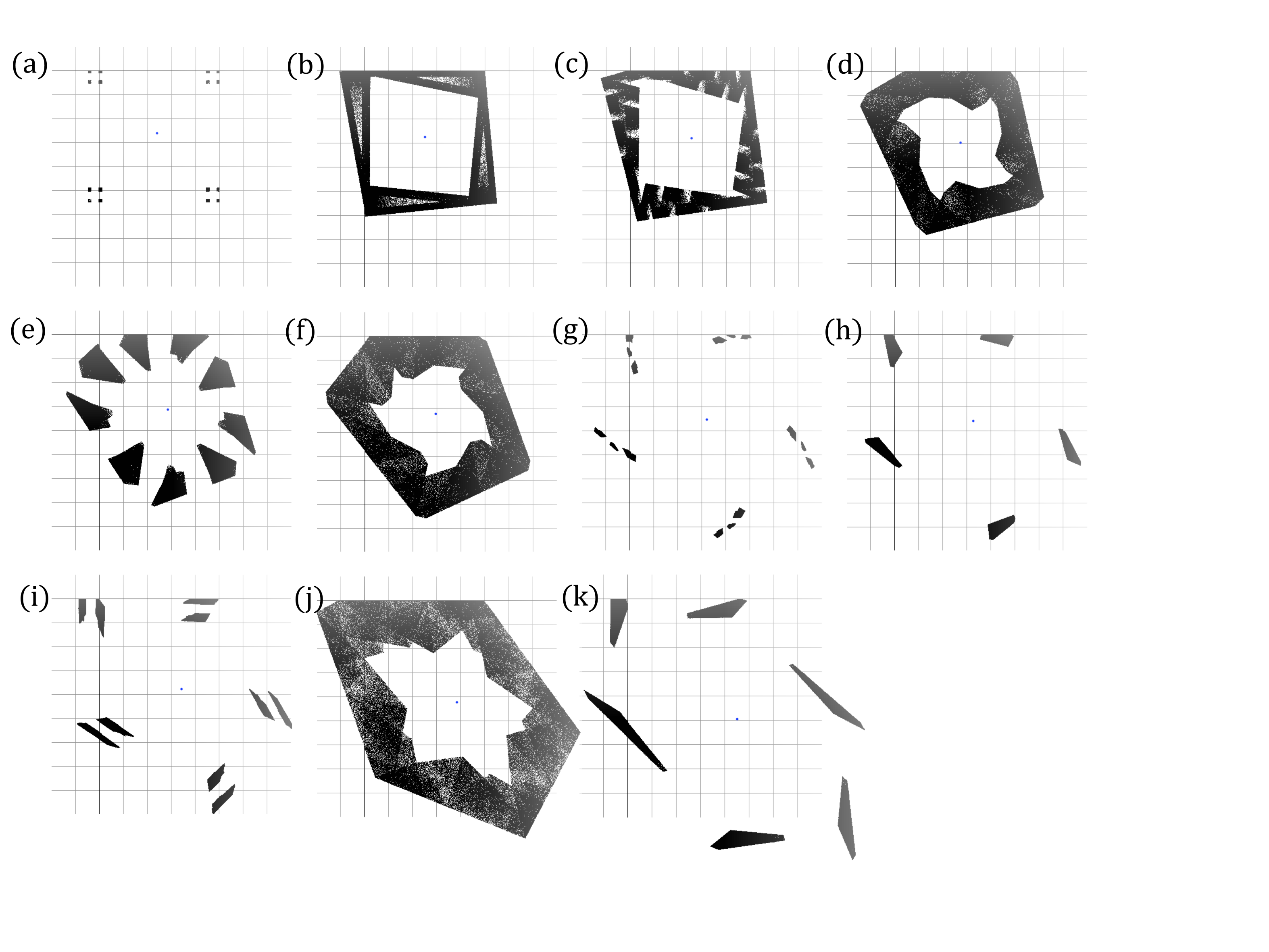}
 \end{center}
 \caption{Numerical illustration of asymptotic periodicity in \eqref{irynatype}.  We show the support of $\{P^{500} f_0\}$ for an initial density $f_0=1_{[-5,5]\times[-5,5]}$, approximated by $1,000\times1,000$ initial points uniformly distributed on $[-5,5]\times[-5,5]$   and various values of $\alpha$ with $\beta=1.1$.
  (a) $\alpha=0.0$,  Period $= 16$; (b) $\alpha=0.1$, Period $= 1$; (c) $\alpha=0.14$, Period $= 1$; (d) $\alpha=0.25$, Period $ =1$; (e) $\alpha=0.34$, Period $= 9$; (f) $\alpha=0.4$, Period $= 1$; (g) $\alpha=0.54$, Period $=12$; (h) $\alpha=0.57$, Period $= 5$; (i) $\alpha=0.64$, Period $= 10$; (j) $\alpha=0.8$, Period $=1$; (k) $\alpha=0.99$, Period $=6$. }
 \label{fig1}
\end{figure}

\begin{figure}[htbp]
\begin{center}
 \includegraphics[width=12cm,bb=0mm 0mm 220mm 100mm,clip]{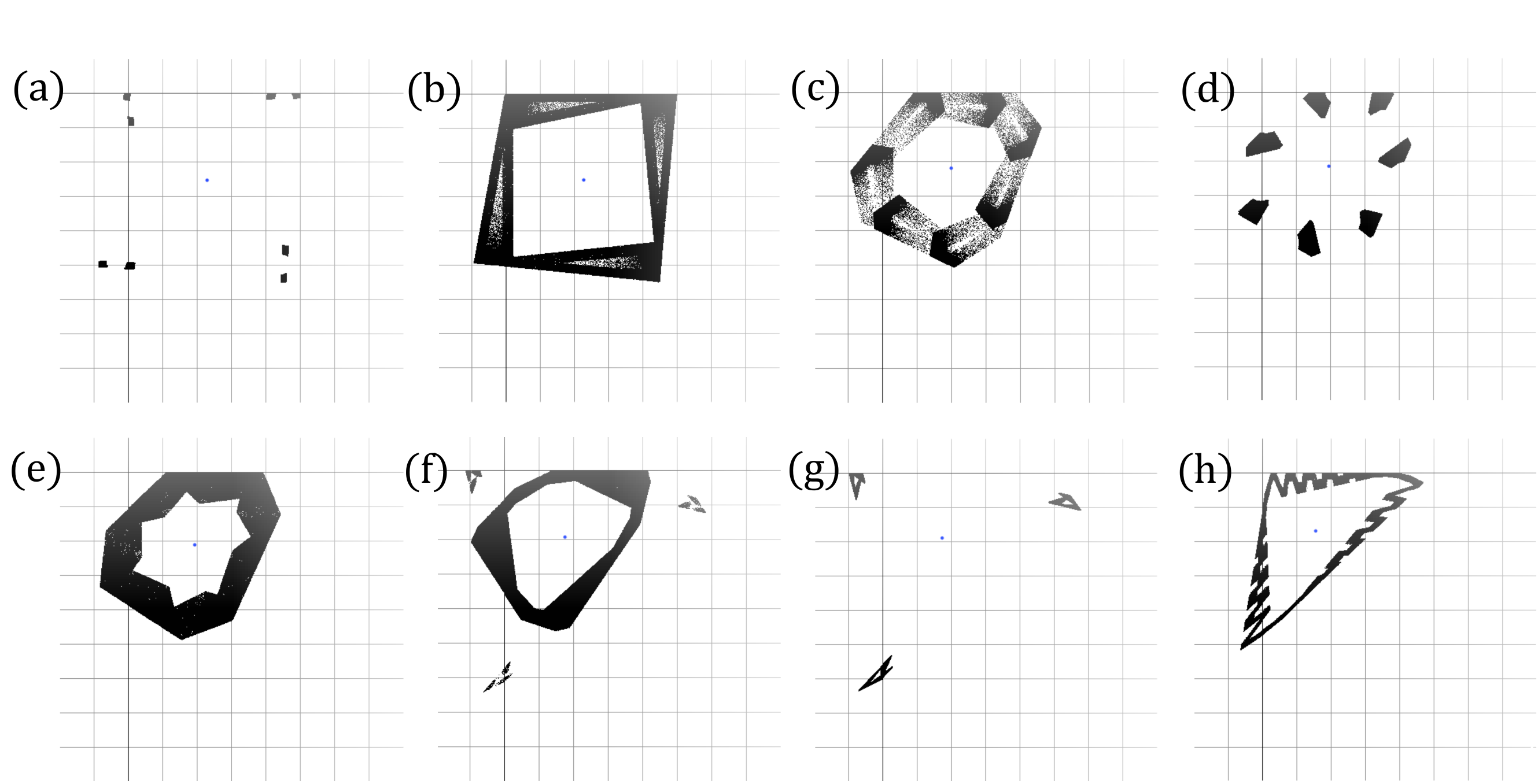}
 \end{center}
 \caption{As in Figure \ref{fig1} with $\beta=1.1$. (a) $\alpha=-0.08$, Period $=8$; (b) $\alpha=-0.1$, Period $=1$; (c) $\alpha=-0.41$, Period $=1$; (d) $\alpha=-0.46$, Period $=7$; (e) $\alpha=-0.5$, Period $=1$; (f) $\alpha=-0.75$, Period $=3$; (g) $\alpha=-0.8$, Period $=3$; (h) $\alpha=-1.14$, Period $=1$;. }
 \label{fig2}
\end{figure}

For smaller $\beta=1.02$ in Figure \ref{fig3} we observe higher periods (Period: (a) 13, (c) 35, (e) 22, (g) 31). In addition to this, we find period $9$ when $\alpha=0.35$. These numerical values of the  periods may be related to a Farey series, see Section \ref{ss:period}.

\begin{figure}[htbp]
\begin{center}
 \includegraphics[width=12cm,bb=0mm 0mm 220mm 100mm,clip]{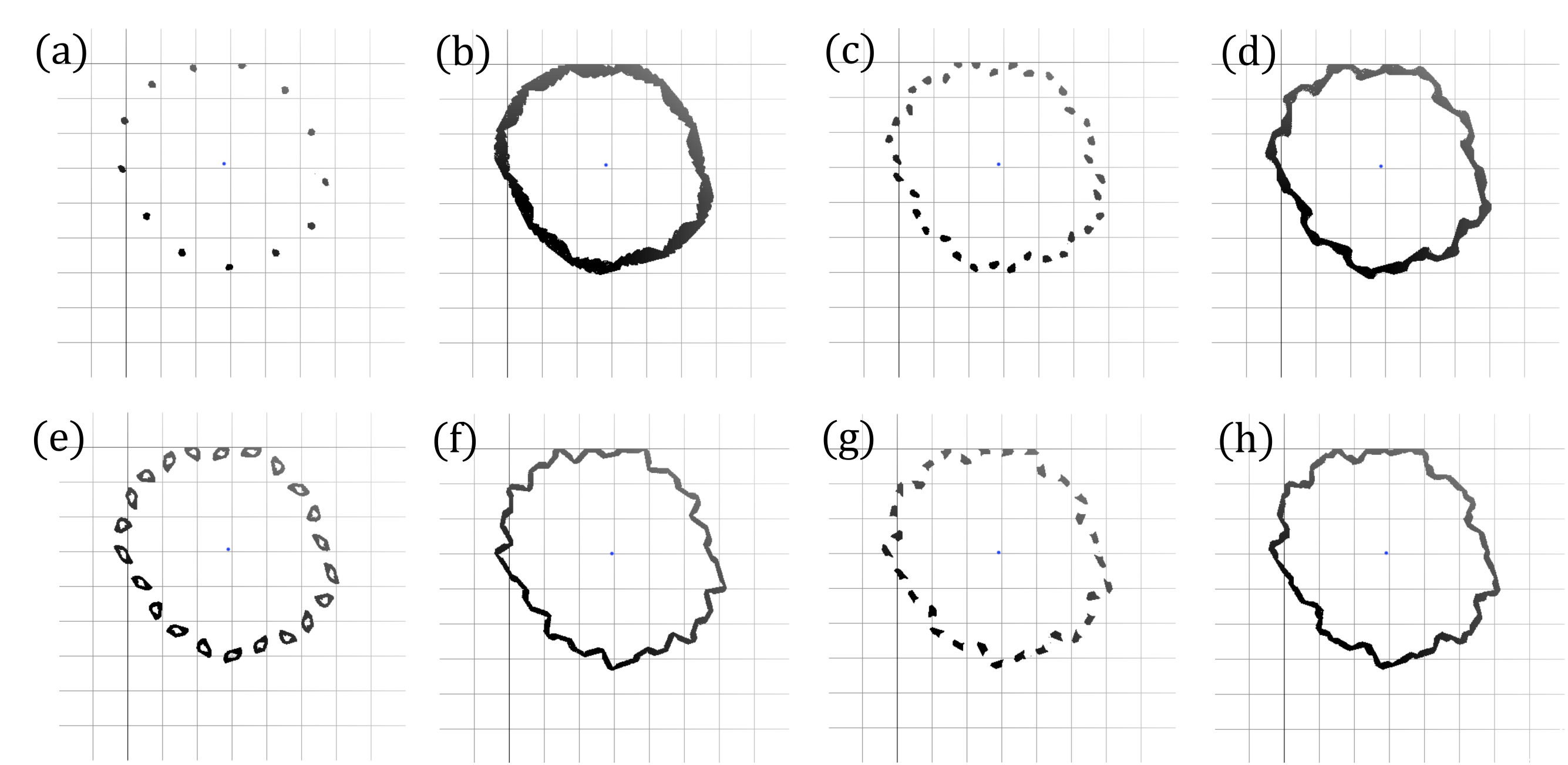}
 \end{center}
 \caption{As in Figure \ref{fig1} with $\beta=1.02$. (a) $\alpha=0.24$, Period $=13$; (b) $\alpha=0.25$, Period $=1$; (c) $\alpha=0.27$, Period $=35$; (d) $\alpha=0.28$, Period $=1$; (e) $\alpha=0.284$, Period $=22$; (f) $\alpha=0.3$, Period $=1$; (g) $\alpha=0.3015$, Period $=31$; (h) $\alpha=0.31$, Period $=1$;.  }
 \label{fig3}
\end{figure}

\subsection{Discussion: Asymptotic periodicity}

Consider \eqref{irynatype} in the context of Corollary \ref{maincoro}. Since \eqref{irynatype} is piecewise linear and the Jacobian $\lambda_n$ for $\tilde{S}^n$ is $\beta^n$, the assumptions (i)-(iv) of Theorem \ref{maintheorem} are satisfied. Thus we need only show the condition (v) holds, that is, $\frac{5}{2\beta^N}r_n<1$ where $r_n$ denotes the number of partitions for $\tilde{S}^n$.

Without loss of generality, it is enough to consider the system (\ref{irynatype}) on the half plane $\mathbb{R}_{\{y\leq 0\}}$ since all points are in  $\mathbb{R}_{\{y\leq 0\}}$ after iterating once.  Let $L$, $M$ and $R$ be the sets $L=\{(x,y)\in\mathbb{R}^2\ |\ x<0,y<0\}$, $M=\{(x,y)\in\mathbb{R}^2\ |\ x=0,y<0\}$ and $R=\{(x,y)\in\mathbb{R}^2\ |\ x>0,y<0\}$ respectively, and denote
\begin{eqnarray}
S_L(x,y)=(\alpha x+ y+1, \beta x) \ \  (x<0),\ \ \ \ S_R(x,y)=(\alpha x+ y+1, -\beta x) \ \  (x\geq 0).
\end{eqnarray}
One immediately has the following properties for (\ref{irynatype}):
\begin{itemize}
\item If $\alpha+\beta>1$, there exists a fixed point $(x_L^{\ast},y_L^{\ast})=(\frac{1}{1-\alpha-\beta},\frac{\beta}{1-\alpha-\beta})\in L$.
\item If $\alpha-\beta<1$, there exists a fixed point $(x_R^{\ast},y_R^{\ast})=(\frac{1}{1-\alpha+\beta},\frac{-\beta}{1-\alpha+\beta})\in R$.
\item The eigenvalues of the Jacobian are $\lambda_L^{\pm}=\frac{\alpha\pm\sqrt{\alpha^2+4\beta}}{2}$ and $\lambda_R^{\pm}=\frac{\alpha\pm\sqrt{\alpha^2-4\beta}}{2}$, and the corresponding eigenvectors are $(\lambda_L^{\pm},\beta)$ and $(\lambda_R^{\pm},-\beta)$.
\item Since $\alpha^2+4\beta>0$ always holds, $\lambda_L^+>1$ if $\alpha+\beta>1$. This implies the fixed point $x_L^{\star}$ is unstable.
\item If $\alpha^2\geq4\beta$ and $\alpha>2$, then $\lambda_R^{+}>1$ and $x_R^{\ast}$ is an unstable node, and  almost all points diverge in this case.
\item In the case $\alpha^2<4\beta$, then $\lambda_R^{\pm}$ is complex which implies $x_R^{\ast}$ is an unstable focus (if $\alpha>0$), a center (if $\alpha=0$) and a stable focus (if $\alpha<0$).
\end{itemize}

Based on these observations, we focus on parameters satisfying $\alpha+\beta>1$, $\alpha-\beta<1$, $\alpha^2<4\beta$, $\alpha>0$ and $1<\beta\leq 2$. Note that although asymptotic periodicity is observed even when $\alpha<0$ (Figure \ref{fig2}), here we assume $\alpha>0$ to simplify the arguments.

First, we know the saddle point $x_L^{\star}\in L$. Let $D_0$ be the set
\begin{eqnarray}
D_0:=\{(x,y)\in L\cup M\ |\ y-y_L^{\ast}<\frac{\beta}{\lambda_L^-}(x-x_L^{\ast})\}.
\end{eqnarray}
From the instability of the fixed point $(x_L^{\ast},y_L^{\ast})$, one can immediately conclude that all points in $D_0$ eventually diverge. Next, let $c$ be a $y$-intercept of the line $y-y_L^{\ast}<\frac{\beta}{\lambda_L^-}(x-x_L^{\ast})$, that is, $c=\beta x_L^{\ast}(1-1/\lambda_L^-)$. Then the $x$-intercept of the line can be calculated as $S_L(0,c)=c+1$ which is always negative when $\alpha+\beta>1$.

Second, consider the inverse sets $D_i:=S_R^{-i}(D_0)\cap \mathbb{R}_{\{y\leq 0\}}$ and the inverse of a point $(0,c)$, $S_R^{-i}(0,c)$ for $i=1,2,3,\cdots$. Note that all points in $D_i$ for some $i$ diverge. Now let $\ell$ be a minimum number  $i$ such that the $y$-coordinate of $S_R^{-i}(0,c)$ is positive. Then let $C$ be a set defined by
\begin{eqnarray}
C:=\mathbb{R}_{\{y\leq 0\}}\backslash\bigcup_{i=0}^{\ell}{D_i}.
\end{eqnarray}
Then $C$ becomes the candidate for the attracting region.
Figure \ref{fig4} illustrates the partition of the half plane ($y\leq0$) and regions $C$ and $\{D_i\}_{i=0}^\ell$ for the case $\ell=5$.

Third, let $p$ (and $q$) be the $x$-coordinate of the intersection point of the line $y=0$ and the line generated by $S_R^{-\ell}(0,c)$ and $S_R^{-(\ell-1)}(0,c)$ (and $S_R^{-\ell}(0,c)$ and $(x_R^*,y_R^*)$). We can consider three cases depending on the values of $p,q$ relative to $1$.

Figure \ref{fig5} illustrates the three possible cases. Figure \ref{fig5} (a) shows the case in which both $p, q < 1$, (b) shows the case $p < 1 < q$, and (c) shows the case with $1 < p,q$. We immediately observe that points in $C$ may leave from $C$ in case (a) because of the black region, and if $q\geq 1,$ then $C$ is a conserved region. Therefore, we can construct a dynamical system which acts on a bounded set by giving the restricted system $\tilde{S}:C\to C$ for the case (b) or (c).
We focus on case (b).

From these observations, there exists a partition $I_0,\cdots,I_{\ell+1}$ in $C$ such that $\tilde{S}^{\ell+2}(I_i)\subset I_i$ for any $i=0,\cdots,\ell+1$ (see Figure \ref{fig6}).
The condition (iv) implies the ratio of entry point of the before and after curve by the inverse transformation. In our case, an increase of the number of entry points happens only for the $S|_{I_{\ell+1}}^{-1}$, in other words, the other $S|_{I_i}^{-1}$, $i=0,\cdots,\ell$ does not increase the entry points because of the rotational behavior. Since $\beta>1$, for some $t$, $\beta^t$ which is the Jacobian of $\tilde{S}^t$, might be larger than $C$. Namely, the condition (iv) and (v) in Theorem \ref{maintheorem} would be satisfied for $\tilde{S}^{t}$ with sufficiently large $t$.

However, it is difficult to check the condition (iv)   because of the impossibility to calculate the change of entry points for all curves. Thus, we do not have checkable sufficient conditions for the assumption (iv) to prove asymptotic periodicity for $\tilde{S}$, which is strongly suggested by the numerical results.
 In the case (c), although it is more complicated due to the black region, we may use similar arguments after one more iterate $S^{-(\ell+1)}_R(0,c)$.

\begin{figure}[tbp]
\begin{center}
 \includegraphics[width=8cm,bb=0mm 0mm 70mm 50mm,clip]{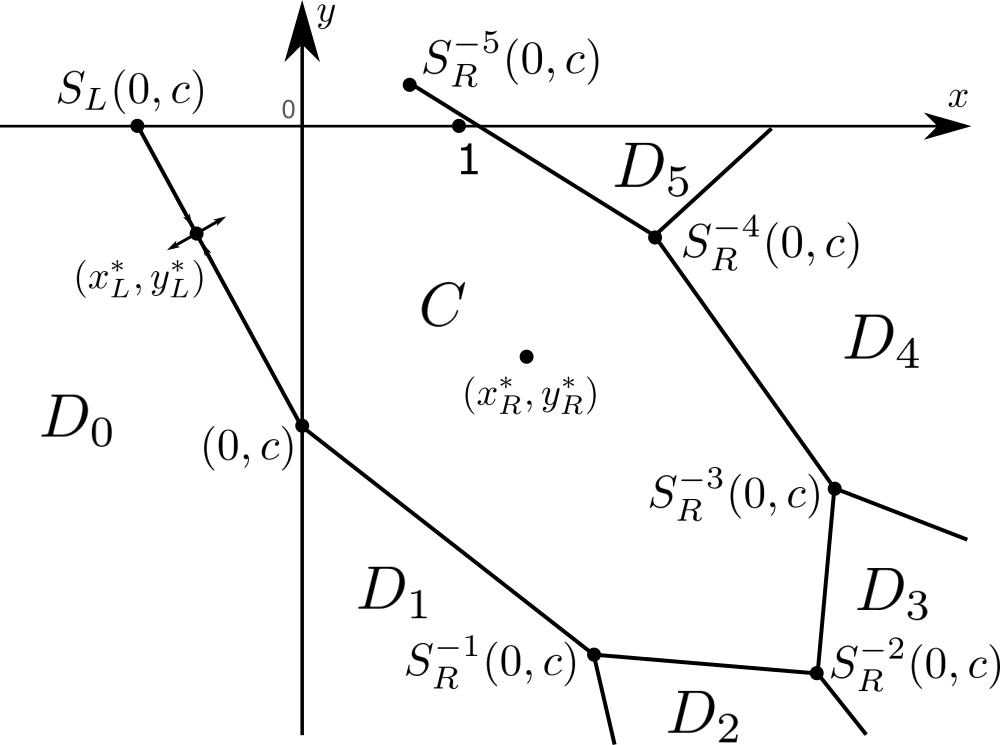}
 \end{center}
 \caption{The regions $D_i$, $i=0,1,\cdots,5$, and $C$ are illustrated when $\ell=5$. The fixed point $(x_L^*,y_L^*)$ is a saddle and $(x_R^*,y_R^*)$ is an unstable focus,}
 \label{fig4}
\end{figure}

\begin{figure}[tbp]
\begin{center}
 \includegraphics[width=12cm,bb=0mm 0mm 200mm 50mm,clip]{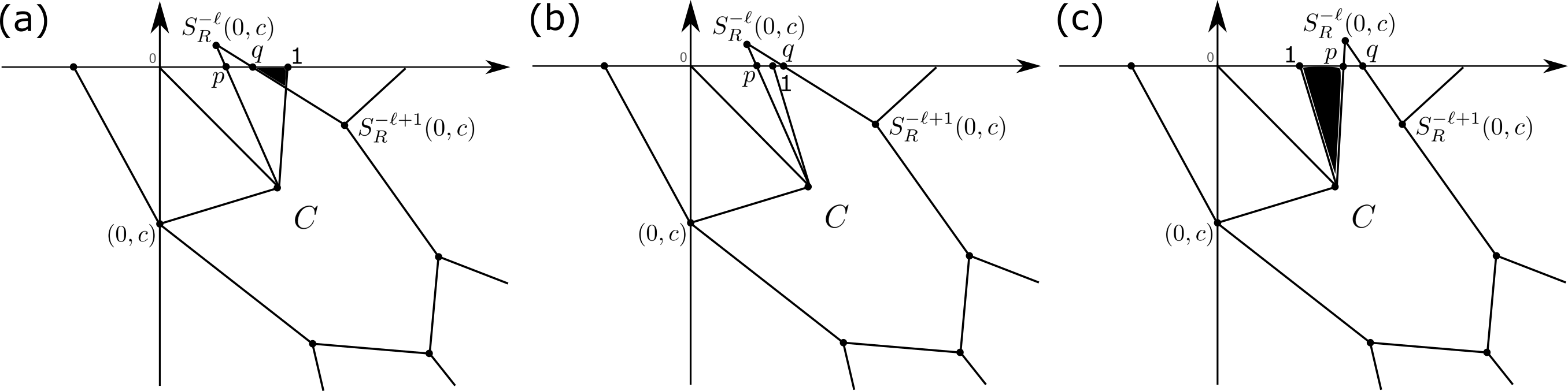}
 \end{center}
 \caption{The situation can be separated into three cases depending on positions of $p,q$ and $1$. (a) the case $p,1<1$, (b) the case $p < 1 < q$, and (c) the case $1<p,q$.}
 \label{fig5}
\end{figure}

\begin{figure}[tbp]
\begin{center}
 \includegraphics[width=12cm,bb=0mm 0mm 130mm 50mm,clip]{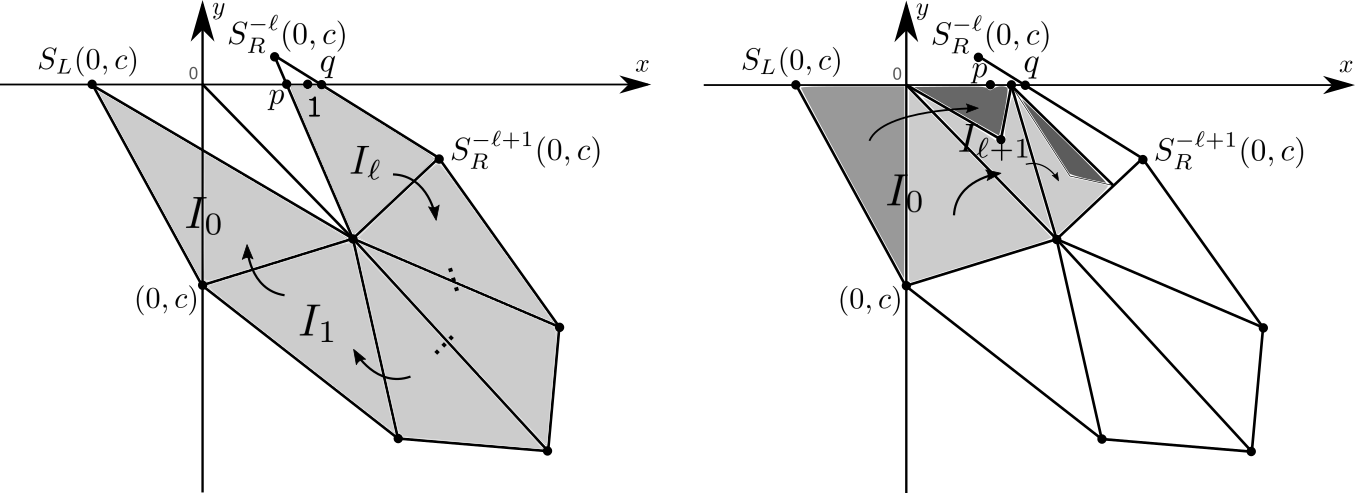}
 \end{center}
 \caption{Illustrations of the result of iterating the regions $\{I_i\}_{i=0}^{\ell+1}$ by $\tilde{S}$.}
 \label{fig6}
\end{figure}

Finally, we will estimate the parameter conditions such that  $q\geq1$ since at least $q$ must be larger than 1 to be a conservative system. If we set $S_R^{-1}({\bm x})=A{\bm x}+{\bm b}$,
then $S_R^{-n}({\bm x})=A^n{\bm x}+(A^{n-1}+\cdots+A+I){\bm b}$ where
$$
A=\begin{pmatrix}
0 & -1/\beta \\
1 & \alpha/\beta
\end{pmatrix},\ \ \
\bm{b}=\begin{pmatrix}
0  \\
-1
\end{pmatrix}.
$$
Thus we have 
$$
A^n=\frac{1}{\nu_--\nu_+}\begin{pmatrix}
\nu_+^n\nu_- - \nu_+\nu_-^n & (-\nu_+^n + \nu_-^n )/\beta \\
(\nu_+^{n+1}\nu_- - \nu_+\nu_-^{n+1})\beta & -\nu_+^{n+1} + \nu_-^{n+1}
\end{pmatrix},
$$
where $\nu_{\pm}$ are eigenvalues of $A$ with $\nu_{\pm}=\frac{\alpha}{2\beta}\pm\frac{\sqrt{4\beta-\alpha^2}}{2\beta}i\in\mathbb{C}$.
By using the above equations, we may write $S^{-n}(0,c)$, $p$ and $q$ explicitly. However,
 not only is the calculation complicated, but also we cannot obtain the number $\ell$ for each set of parameters. Thus we numerically show only approximate values of $\alpha$ which gives the condition for $q\geq 1$ for some values of $\beta$ in Table \ref{t:table11}.

\begin{table}[htb]
\begin{center}
  \begin{tabular}{|c|c|c||c|c|c||c|c|c||c|c|c|} \hline
    $\beta$ & $\ell$ & $\alpha<$ & $\beta$ & $\ell$ & $\alpha<$ & $\beta$ & $\ell$ & $\alpha<$ & $\beta$ & $\ell$ & $\alpha<$ \\ \hline \hline
    1.01 & 14 & 1.85664 & 1.06 & 7 & 1.57519 & 1.2 & 4 & 1.15624 & 1.7 & 3 & 0.53436\\
    1.02 & 11 & 1.78516 & 1.07 & 7 & 1.56379 & 1.3 & 3 & 1.03992 & 1.8 & 2 & 0.32593\\
    1.03 & 9  & 1.71214 & 1.08 & 6 & 1.48766 & 1.4 & 3 & 0.78308 & 1.9 & 2 & 0.13439\\
    1.04 & 8  & 1.65753 & 1.09 & 6 & 1.46841 & 1.5 & 3 & 0.66496 & 2.0 & 2 & 0.00000\\
    1.05 & 8  & 1.64245 & 1.1  & 5 & 1.45765 & 1.6 & 3 & 0.58999 & & &  \\\hline
  \end{tabular}
   \caption{\label{t:table11}For each $\beta$, the value $\alpha$ which gives the condition for $q\geq 1$ are calculated numerically.}
  \end{center}

\end{table}

\subsection{Discussion: Period}\label{ss:period}

We would like to be able to predict the period of the asymptotic periodicity in \eqref{irynatype} for a given set of parameters $(\alpha,\beta)$, but although we can make a partition $\{I_i\}_{i=0}^{\ell+1}$ as the previous section, we cannot find the period or the relation between $\ell$ and period.

The numerical results in Figure \ref{fig1},\ref{fig2} and \ref{fig3} tantalizingly remind one of the Farey series\footnote{The definition of Farey series of order $n$, denoted by $F_n$, is the set of reduced fractions in the closed interval $[0,1]$ with denominators $\leq n$, listed in
increasing order of magnitude. For instance, $F_1=\{0,1\}$, $F_2=\{0,1/2,1\}$, $F_3=\{0,1/3,1/2,2/3,1\}$ and so on. (See \cite{apostol1976modular} for details). One of the important properties of Farey series is that each fraction in $F_{n+1}$ which is not in $F_n$ is the mediant of a pair of consecutive fractions in $F_n$. For example, $2/5$ in $F_5$ is made by $1/3$ and $1/2$ in $F_4$, that is, $1/3\oplus 1/2=(1+1)/(3+2)=2/5$. The operation $\oplus$ is called the Farey sum.}.
In dynamical systems, periodic structures based on the Farey series sometimes appear, for instance in circle map models of cardiac arrhythmias \cite{boyland1986,glass1983,swiatek1988}.
The fraction $l/n$ corresponds to a rotation number of the system, that is, every periodic orbit has period $n$. Nakamura \cite{nakamura2017} proved that the Markov operator corresponding to the perturbed piecewise linear map \eqref{eq:NS} exhibits asymptotically periodicity, and clarified the relationship of the periods associated with the Farey series for various parameters.

For our example \eqref{irynatype}, Figure \ref{fig3} displays asymptotic periodicity with period 22 in between values of the  parameter $\alpha$ giving rise to period 13 and 9, while period 35 is between 13 and 22, and period 31 is between 22 and 9.  Moreover, we observe period $58$  $ (\alpha=0.322)$, $76$ $ (\alpha=0.328)$ and $47$ $ (\alpha=0.42)$. That is, there exist parameters for which the system has asymptotic periodicity with period $p_1+p_2$ between the parameters which give periods $p_1$ and $p_2$. To take this observation and relate the periodicity of  \eqref{irynatype} to the Farey series is a matter for future research.

\section*{Acknowledgement}
The work is supported by the Natural Sciences and
Engineering Research Council (NSERC) of Canada, and the Ministry of Education, Culture, Sports, Science and Technology
through Program for Leading Graduate Schools (Hokkaido University ``Ambitious
Leader's Program'').

\end{document}